\newcommand{\thetitle}{Distribution of the eigenvalues of a random system of homogeneous polynomials}
\newcommand{\thelanguage}{english}
\title{\thetitle}
\newcommand{\HC}{\mathbb{C}}
\newcommand{\HR}{\mathbb{R}}
\newcommand{\HS}{\mathbb{S}}
\newcommand{\kU}{\mathfrak{U}}
\newcommand{\cA}{\mathcal{A}}
\newcommand{\cC}{\mathcal{C}}
\newcommand{\cH}{\mathcal{H}}
\newcommand{\cL}{\mathcal{L}}
\newcommand{\cU}{\mathcal{U}}
\newcommand{\cV}{\mathcal{V}}
\newcommand{\cW}{\mathcal{W}}
\newcommand{\set}[1]{\left\{#1\right\}}
\newcommand{\cset}[2]{\left\{#1\mid #2\right\}}
\newcommand{\norm}[1]{\left\lvert #1 \right\rvert}
\newcommand{\rank}{\mathrm{rk}\,}
\renewcommand{\d}{\mathrm{d}}
\DeclareMathOperator*{\prob}{\mathrm{Prob}}
\DeclareMathOperator*{\Prob}{\mathrm{Prob}}
\DeclareMathOperator*{\mean}{\mathbb{E}}
\numberwithin{equation}{section}
\numberwithin{figure}{section}
\theoremstyle{plain}
\newcounter{numbering} \numberwithin{numbering}{section}
\newtheorem{thm}[numbering]{Theorem}
\newtheorem{prop}[numbering]{Proposition}
\newtheorem{cor}[numbering]{Corollary}
\newtheorem{dfn}[numbering]{Definition}
\newtheorem{rem}[numbering]{Remark}
\theoremstyle{nonumberplain}
\Crefname{equation}{}{}
\Crefname{equation}{}{}
\Crefname{theorem}{Theorem}{Theorems}
\begin{document}

\begin{center}
{\bf\Large \thetitle \\[1cm]}
Paul Breiding
\footnote{Institute of Mathematics, Technische Universität Berlin, breiding@math.tu-berlin.de. Partially supported by DFG research grant BU 1371/2-2.}
\hspace{1cm}
Peter Bürgisser
\footnote{Institute of Mathematics, Technische Universität Berlin, pbuerg@math.tu-berlin.de.\\Partially supported by DFG research grant BU 1371/2-2.}
\end{center}

\begin{abstract}
Let $f=(f_1,\ldots,f_n)$ be a system of $n$ complex homogeneous polynomials in $n$ variables of degree $d$. 
We call $\lambda\in\HC$ an eigenvalue of~$f$ if there exists
$v\in\HC^n\backslash\set{0}$ with $f(v)=\lambda v$, 
generalizing the case of eigenvalues of matrices ($d=1$). 
We derive the distribution of $\lambda$ when the $f_i$ are
independently chosen at random according to the unitary invariant Weyl
distribution and determine the limit distribution for $n\to\infty$.
\end{abstract}

\smallskip

\noindent{\bf AMS subject classifications:} 15A18, 15A69; 60D05

\smallskip

\noindent{\bf Key words:} 
tensors, eigenvalues, eigenvalue distribution, random polynomials,
computational algebraic geometry 

\section{Introduction}

The theory of eigenvalues and eigenvectors of matrices is a well-studied subject in mathematics with a wide range of application. However, attempts to generalize this concept to homogeneous polynomial systems of higher degree have only been made very recently, motivated by tensor analysis \cite{qi2,qi1}, spectral hypergraph theory \cite{qi3} or optimization \cite{lim1}. An overview on recent publications can be found in \cite{lim2}, where the authors use the term "spectral theory of tensors".

Following Cartwright and Sturmfels, who in \cite{sturmfels-cartwright} adapt Qi's definition of E-eigenvalues, we say that a pair $(v,\lambda)\in(\HC^n\backslash\set{0})\times \HC$ is an \emph{eigenpair} of a system $f:=(f_1,\ldots,f_n)$ of $n$ complex homogeneous polynomials of degree~$d$ in the variables $X_1,\ldots,X_n$ if 
$f(v)=\lambda v$. 
We call $v$ an eigenvector and $\lambda$ an eigenvalue of $f$. If in addition $v^T\overline{v}=1$, we call the pair $(v,\lambda)$ normalized.

By \cite[Theorem 1.3]{lim3} we expect the task of computing eigenvalues of a given system to be hard. It is therefore natural to ask for the distribution of the eigenvalues, when the system $f$ is random.

In the case $d=1$ we obtain the  definition of eigenpairs of matrices. In \cite{ginibre} Ginibre assumes the entries of a complex matrix $A~=~(a_{i,j})$ to be independently distributed with density $\pi^{-1} \exp(-\lvert a_{i,j}\rvert^2)$ and describes the distribution of an eigenvalue~$\lambda$, 
that is chosen uniformly at random from the $n$ eigenvalues of $A$.

Can Ginibre's results be extended to arbitrary degree $d$? The answer is yes and provided in this paper. Let us call two eigenpairs $(v,\lambda),(w,\eta)$ \emph{equivalent} if there exists some~$t\in\HC\backslash\set{0}$, such that $(v,\lambda)=(tw,t^{d-1}\eta)$. 
Note that if both $(v,\lambda)$ and $(w,\eta)\in \cC$ are normalized, then we must have $\lvert t\rvert=1$.
This implies that the intersection of an equivalence class with the set of normalized eigenpairs of $f$ is a circle, that we assume to have volume $2\pi$. 
Cartwright and Sturmfels point out in \cite[Theorem 1]{sturmfels-cartwright} that if $d>1$, the number of equivalence classes of eigenpairs of a generic $f$ is $D(n,d):=(d^n-1)/(d-1)$. 

We define a probability distribution on the space of eigenvalues as follows:
\begin{enumerate}
\item \label{main_distribution1}
For each $1\leq i\leq n$ choose $f_i$ independently at random with the density $\pi^ {-k}\, \exp(-\lVert f_i\rVert^2)$, where $k:=\binom{n-1+d}{d}$. Here $\lVert \, \rVert$ is the unitary invariant norm on the space of homogeneous polynomials of degree $d$ defined in \Cref{se:geo-framework}, see also \cite[sec. 16.1]{condition}. (The resulting distribution of the $f_i$ is sometimes called the Weyl distribution.)
\item Among the $D(n,d)$ many equivalence classes $\cC$ of eigenpairs of $f$, 
choose one uniformly at random.
\item \label{main_distribution2} 
Choose an normalized eigenpair $(v,\lambda)\in\cC$ uniformly at random.
\item Apply the projection $(v,\lambda)\mapsto \lambda$.
\end{enumerate}

We denote by $\rho^{n,d}:\HC\to\HR_{\geq 0},\; \lambda\mapsto \rho^{n,d}(\lambda)$ 
the density of the resulting probability distribution. 
Observe that if $d=1$, then $\rho^{n,1}$ is the density of Ginibre's distribution. 

The unitary invariance of $\lVert \, \rVert^2$ implies that $\rho^{n,d}(\lambda)$ only depends on $\lvert \lambda\rvert$, but not on the argument of $\lambda$.
We therefore introduce the following notation:
	\begin{equation}\label{R}
	R:=2 \norm{\lambda}^2.
	\end{equation}
We will prove that the random variable $R$ follows a distribution that, if $d=1$, 
is mixed from $\chi^2$-distributions with weights from the uniform distribution on~$n$ items, 
and, if $d>1$, is mixed from $\chi^2$-distributions with weights from the geometric distribution $\mathrm{Geo}(p)$ truncated at $n$.
(See \Cref{truncated_dfn} for details on the truncated geometric distribution.)  

Here is our main result.

\begin{thm}\label{main_thm}
Let $n,d\geq 1$ and $\lambda$ be distributed with density $\rho^{n,d}$. Let $\rho_\HR^{n,d}$ denote the density of $R=2\lvert\lambda\rvert^2$.
\begin{enumerate}
\item If $d=1$, then \label{p1} 
	\begin{equation*}
	\rho^{n,1}_\HR(R) = \frac{1}{n}\, \sum_{k=1}^{n} \chi^2_{2k}(R)= \sum_{k=1}^{n} \prob\limits_{X\sim\mathrm{Unif}(\set{1,\ldots,n})}\set{X=k} \,  \chi^2_{2k}(R).
	\end{equation*}  
\item If $d>1$, then \label{p2} 
	\begin{align*}
	\rho^{n,d}_\HR(R) &= \frac{d-1}{d^n-1}\, \sum_{k=1}^{n} d^{n-k}  \,  \chi^2_{2k}(R)\\
	&= \sum_{k=1}^{n} \prob\limits_{X\sim \mathrm{Geo}(1-\frac{1}{d})}\cset{X=k}{X\leq n} \,  \chi^2_{2k}(R).
	\end{align*}  
\end{enumerate}
Here $\chi^2_{2k}(R):=(e^{-\frac{R}{2}} R^{k-1})/(2^k (k-1)!)$ is the the density of a chi-square distributed random variable with $2k$ degrees of freedom.
\end{thm}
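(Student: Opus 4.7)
The plan is to apply a complex Kac--Rice / coarea formula to the eigenpair equation $f(v)=\lambda v$, use the unitary invariance of the Weyl distribution to reduce to the case $v=e_1$, and identify the resulting expectation as a moment of a complex Ginibre matrix. Consider the incidence variety
$\cV := \set{(f,v,\lambda)\in\polspace^n\times S^{2n-1}\times\HC : f(v)=\lambda v}$.
The eigenpair relation is $S^1$-equivariant under $(v,\lambda)\mapsto(tv,t^{d-1}\lambda)$, so for each generic $f$ the zero set is a disjoint union of $D(n,d)$ circles. Choosing a $2n$-real-dimensional slice transverse to these orbits (for instance $\dot v_1=0$ inside $T_{e_1}S^{2n-1}$) and applying the coarea formula, combined with the unitary invariance of the Weyl measure (which lets one fix $v=e_1$), gives
\[
\rho(e_1,\lambda) = \pi^{-n}e^{-|\lambda|^2}\cdot \HE\bigl[\,\lvert\det J\rvert^2\,\bigm|\,f(e_1)=\lambda e_1\,\bigr],
\]
where $\pi^{-n}e^{-|\lambda|^2}$ is the Gaussian density at the origin of the complex Gaussian $f(e_1)-\lambda e_1$ (of mean $-\lambda e_1$ and covariance $I_n$), and $J$ is the Jacobian of $(\dot v,\dot\lambda)\mapsto Df(e_1)\dot v-\dot\lambda e_1-\lambda\dot v$ on the slice.

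Next, decompose each $f_i$ in the Weyl-orthonormal monomial basis as
\[
f_i = a_i X_1^d + \sum_{j=2}^n b_{ij}\,X_1^{d-1}X_j + g_i,
\]
with $g_i$ in the Weyl-orthogonal complement. Under the Weyl density the coefficients are jointly independent complex Gaussians with $\HE|a_i|^2=1$ and $\HE|b_{ij}|^2=d$ (because the squared Weyl norm of $X_1^{d-1}X_j$ equals $1/d$). Conditioning on $f(e_1)=\lambda e_1$ fixes $a_1=\lambda$ and $a_i=0$ for $i\ge 2$, but leaves the matrix $M:=(b_{ij})_{i,j\ge 2}$ distributed as an $(n-1)\times(n-1)$ complex Ginibre matrix with entry-variance $d$. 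On the slice $\dot v_1=0$ the Jacobian becomes an $n\times n$ complex matrix of block shape
\[
J = \begin{pmatrix} * & -1 \\ M-\lambda I_{n-1} & 0 \end{pmatrix},
\]
and expansion along the last column gives $\lvert\det J\rvert^2 = \lvert\det(M-\lambda I_{n-1})\rvert^2$, independent of the conditioning on the $a_i$'s.

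Now apply the complex Ginibre determinant identity
\[
\HE_M\,\lvert\det(M-\lambda I_{n-1})\rvert^2 = (n-1)!\,\sum_{j=0}^{n-1}\frac{d^{\,n-1-j}\,|\lambda|^{2j}}{j!},
\]
derivable either from the Schur decomposition $M=UTU^\ast$ combined with the known joint density of Ginibre eigenvalues, or directly from a Wick expansion of $\det(M-\lambda I_{n-1})$. Combining with the prefactor $\pi^{-n}e^{-|\lambda|^2}$, with the volume $\mathrm{vol}(S^{2n-1}/S^1)=\pi^{n-1}/(n-1)!$ (from integrating out $v$ by invariance), and with the factor $1/D(n,d)$ (from the uniform choice among the $D(n,d)$ equivalence classes), one obtains
\[
\rho^{n,d}(\lambda) = \frac{1}{\pi\,D(n,d)}\,e^{-|\lambda|^2}\sum_{j=0}^{n-1}\frac{d^{\,n-1-j}\,|\lambda|^{2j}}{j!}.
\]
A polar change of variables $R=2|\lambda|^2$ together with the identity $\chi^2_{2k}(R)=R^{k-1}e^{-R/2}/(2^k(k-1)!)$, taking $k=j+1$, converts this into
$\rho^{n,d}_\HR(R) = D(n,d)^{-1}\sum_{k=1}^{n} d^{\,n-k}\,\chi^2_{2k}(R)$, which collapses to the uniform weights $1/n$ when $d=1$ and to the truncated geometric weights $(d-1)d^{n-k}/(d^n-1)$ when $d>1$.

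The main obstacle will be the correct normalization of the coarea formula in the presence of the $S^1$-symmetry: since the zero set of $(v,\lambda)\mapsto f(v)-\lambda v$ is one-dimensional rather than discrete, one must identify a clean transverse slice and carefully track the interplay between the volumes of $S^{2n-1}/S^1$ and $\HC$ and the normalization by $D(n,d)$. Once the Jacobian is reduced to $\lvert\det(M-\lambda I_{n-1})\rvert^2$ for a complex Ginibre matrix $M$, the rest of the argument is routine Gaussian integration and elementary manipulation of chi-square densities.
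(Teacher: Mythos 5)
Your proposal is correct and follows essentially the same route as the paper: the unitary-invariance reduction to $v=e_1$, the Bombieri--Weyl decomposition that turns the relevant Jacobian block into a complex Ginibre matrix with entry-variance $d$ (your $M$ is the paper's $\sqrt{d}A$, so $\lvert\det J\rvert^2=\lvert\det(\sqrt{d}A-\lambda I_{n-1})\rvert^2$), the expected squared characteristic polynomial identity (the paper's \Cref{expectation_det}), and the same change of variables to $R=2\lvert\lambda\rvert^2$; your intermediate density for $\lambda$ agrees exactly with \Cref{pushforward}. The coarea/Kac--Rice normalization in the presence of the $S^1$-symmetry, which you flag as the main obstacle, is precisely what the paper settles via \Cref{quotient_NJ} and \Cref{useful_lemma}, and your bookkeeping (the prefactor $\pi^{-n}e^{-\lvert\lambda\rvert^2}$, $\operatorname{vol}(S^{2n-1}/S^1)=\pi^{n-1}/(n-1)!$, and the factor $1/D(n,d)$) reproduces the same constants.
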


We note that 
$\prob\limits_{X\sim \mathrm{Geo}(p)}\set{X=k}$
is the probability that the first success of independent Bernoulli trials, each with success probability $p$, is achieved in the $k$-th trial. 
Moreover, for $1\leq k\leq n$ and $0\leq q < 1$ we have
	\begin{align*}
	\sum_{t=0}^\infty \prob\limits_{X\sim \mathrm{Geo}(1-q)}\set{X=k+tn} &= q^{k-1}(1-q)\sum_{t=0}^\infty q^{tn} \\
               &= \prob\limits_{X\sim \mathrm{Geo}(1-q)}\cset{X=k}{X\leq n};\end{align*}
for the last equality see \Cref{truncated_dfn}. 
One can therefore sample $\lvert \lambda\rvert^2$ by the following procedure.

\begin{enumerate}
\item If $d=1$, choose $k\in\set{1,\ldots,n}$ uniformly at random.
\item If $d>1$, make Bernoulli trials with success probability $1-\frac{1}{d}$ until the first success. 
Let $\ell$ be the number of the last trial and $k$ the remainder of~$\ell$ when divided by $n$.
\item Choose $x_1,\ldots,x_{2k} \stackrel{iid}\sim N(0,1)$.
\item Put $R:=\sum\limits_{i=1}^{2k} x_i^2$.
\item Output: $\frac{1}{2}R$.
\end{enumerate}

\begin{rem}\label{connection}
By de l'Hopital's rule  we have $\lim\limits_{d\to 1}
\frac{d-1}{d^n-1} = \frac{1}{n}.$ This implies that $\lim\limits_{d\to 1} \rho^{n,d}(R)=\rho^{n,1}(R)$, 
which yields a connection between the cases $d=1$ and $d>1$ 
(observe that here we allowed $d$ to be any real number). 
\end{rem}
We can compute the expectation of the random variable $\lvert \lambda\rvert^2$; cf.~Figure~\ref{fig1}. 
\begin{cor}\label{second_main_result}
If $d=1$, then 
$\mean\limits_{\lambda\sim \rho^{n,1}} \lvert \lambda\rvert^2 =\frac{n+1}{2}$.
If $d>1$, then 
\[\mean\limits_{\lambda\sim \rho^{n,d}} \lvert\lambda\rvert^2 \ =\  \frac{n -(n+1)d +d^{n+1}}{(d^n-1)(d-1)}.\]
We have 
$\lim\limits_{d\to \infty} \mean\limits_{\lambda\sim  \rho^{n,d}} \lvert\lambda\rvert^2 =1$ 
and
$\lim\limits_{n\to \infty} \mean\limits_{\lambda\sim \rho^{n,d}} \lvert\lambda\rvert^2 \ =\  \frac{d}{d-1}$ 
if $d>1$.
Moreover, for fixed~$n$, the function $d\mapsto \mean\limits_{\lambda\sim
  \rho^{n,d}} \lvert\lambda\rvert^2$ is strictly decreasing. 
For fixed~$d$, the function $n\mapsto \mean\limits_{\lambda\sim \rho^{n,d}} \lvert\lambda\rvert^2 $ is strictly increasing. 
\end{cor}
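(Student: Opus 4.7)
The plan is to reduce everything to the formula $\mean |\lambda|^2 = \frac{1}{2}\mean R$ and then exploit the mixture representation of $R$ given by \Cref{main_thm}. Since a $\chi^2_{2k}$-distributed random variable has mean $2k$, the mixture formula immediately yields
\[
\mean_{\lambda \sim \rho^{n,d}} |\lambda|^2 \;=\; \sum_{k=1}^n k\cdot w_k^{(n,d)},
\]
where $w_k^{(n,1)}=1/n$ and $w_k^{(n,d)}=\frac{d-1}{d^n-1}d^{n-k}$ for $d>1$. Thus the first task is a routine computation of these two weighted sums.

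For $d=1$ the sum is $\frac{1}{n}\sum_{k=1}^n k=\frac{n+1}{2}$. For $d>1$ I would substitute $j=n-k$ and write $\sum_{k=1}^n k\, d^{n-k}=\sum_{j=0}^{n-1}(n-j)d^{j}$, then evaluate the two resulting geometric-type sums via the standard identity obtained by differentiating $\sum_{j=0}^{n-1} d^j=\frac{d^n-1}{d-1}$. Multiplying by $\frac{d-1}{d^n-1}$ and simplifying gives the announced closed form $\frac{n-(n+1)d+d^{n+1}}{(d^n-1)(d-1)}$.

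The two limits follow directly by identifying the leading terms: as $d\to\infty$, numerator and denominator are both dominated by $d^{n+1}$; for fixed $d>1$ and $n\to\infty$, the dominant contributions are $d^{n+1}$ on top and $d^n(d-1)$ on the bottom, giving $d/(d-1)$. The continuity claim at $d=1$ of \Cref{connection} is consistent with $\lim_{d\to 1}\frac{n-(n+1)d+d^{n+1}}{(d^n-1)(d-1)} = \frac{n+1}{2}$ (L'Hôpital twice).

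The main obstacle will be the two monotonicity statements. I would handle these \emph{probabilistically} rather than by direct manipulation of the closed form, writing $\mean|\lambda|^2 = \mean X_{n,d}$ where $X_{n,d}$ is the truncated geometric index from \Cref{main_thm}. Monotonicity in $n$: letting $p_k=(1-q)q^{k-1}$ with $q=1/d$, $A_n=\sum_{k=1}^n p_k$ and $B_n=\sum_{k=1}^n k p_k$, a short calculation shows
\[
B_{n+1}A_n - B_n A_{n+1} \;=\; p_{n+1}\sum_{k=1}^n (n+1-k)\,p_k \;>\;0,
\]
so $\mean X_{n,d}=B_n/A_n$ is strictly increasing in $n$. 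Monotonicity in $d$: with $u(q)=\sum_{k=1}^n k q^{k-1}$ and $v(q)=\sum_{k=1}^n q^{k-1}$, the derivative of $u/v$ has numerator
\[
u'v-uv' \;=\; \tfrac{1}{2}\sum_{k,l=1}^n q^{k+l-3}(k-l)^2 \;>\; 0,
\]
so $q\mapsto u(q)/v(q)$ is strictly increasing; since $q=1/d$ is strictly decreasing in $d$, the mean is strictly decreasing in $d$. This closes the corollary.
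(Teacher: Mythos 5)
Your proposal is correct, and for the closed forms and the two limits it is essentially the paper's argument: the paper likewise passes from \Cref{main_thm} to $\mean|\lambda|^2=\sum_{k=1}^n k\,w_k^{(n,d)}$ (its \Cref{main_cor}), recognizes the weights as a geometric law truncated at $n$, and evaluates the sum by differentiating a geometric series (\Cref{expectation_truncatedgeo}); your substitution $j=n-k$ is the same computation in different clothing, and the limit arguments coincide. Where you genuinely diverge is the monotonicity. The paper extends the closed form to a function of a real variable $d$ on $\mathbb{R}_{\geq 1}$ and merely asserts that ``one checks'' its derivative is negative on $\mathbb{R}_{>1}$, with the $n$-monotonicity dismissed as provable ``in the same way''; no details are given, and treating the discrete parameter $n$ by that route is somewhat awkward. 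Your ratio-of-partial-sums argument replaces this with two explicit positivity identities, $B_{n+1}A_n-B_nA_{n+1}=p_{n+1}\sum_{k=1}^n(n+1-k)p_k$ and $u'v-uv'=\tfrac12\sum_{k,l=1}^n(k-l)^2q^{k+l-3}$, both of which check out; this is self-contained, handles integer $n$ directly, makes the strictness transparent, and, since $u/v$ is increasing on $q\in(0,1]$ with value $\tfrac{n+1}{2}$ at $q=1$, it even incorporates the comparison with the case $d=1$. One caveat, shared with the paper's own statement and proof: for $n=1$ your double sum vanishes and indeed $\mean|\lambda|^2\equiv 1$, so ``strictly decreasing in $d$'' requires $n\geq 2$; it would be worth noting this explicitly.
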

\begin{figure}[h] 
	\centerline{\begin{minipage}{0.5\textwidth}
  \includegraphics[width=\textwidth]{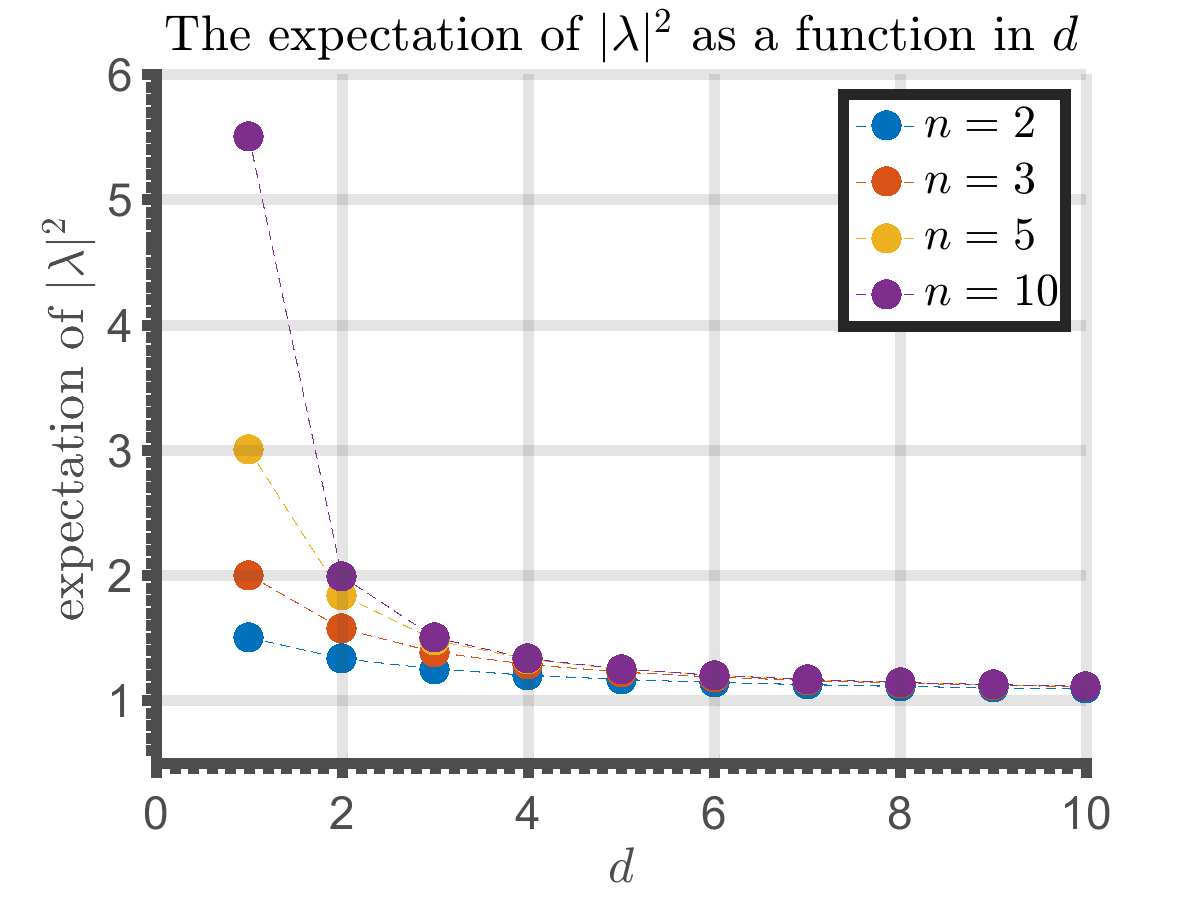}
  \end{minipage}\hfill
  \begin{minipage}{0.5\textwidth}
  \includegraphics[width=\textwidth]{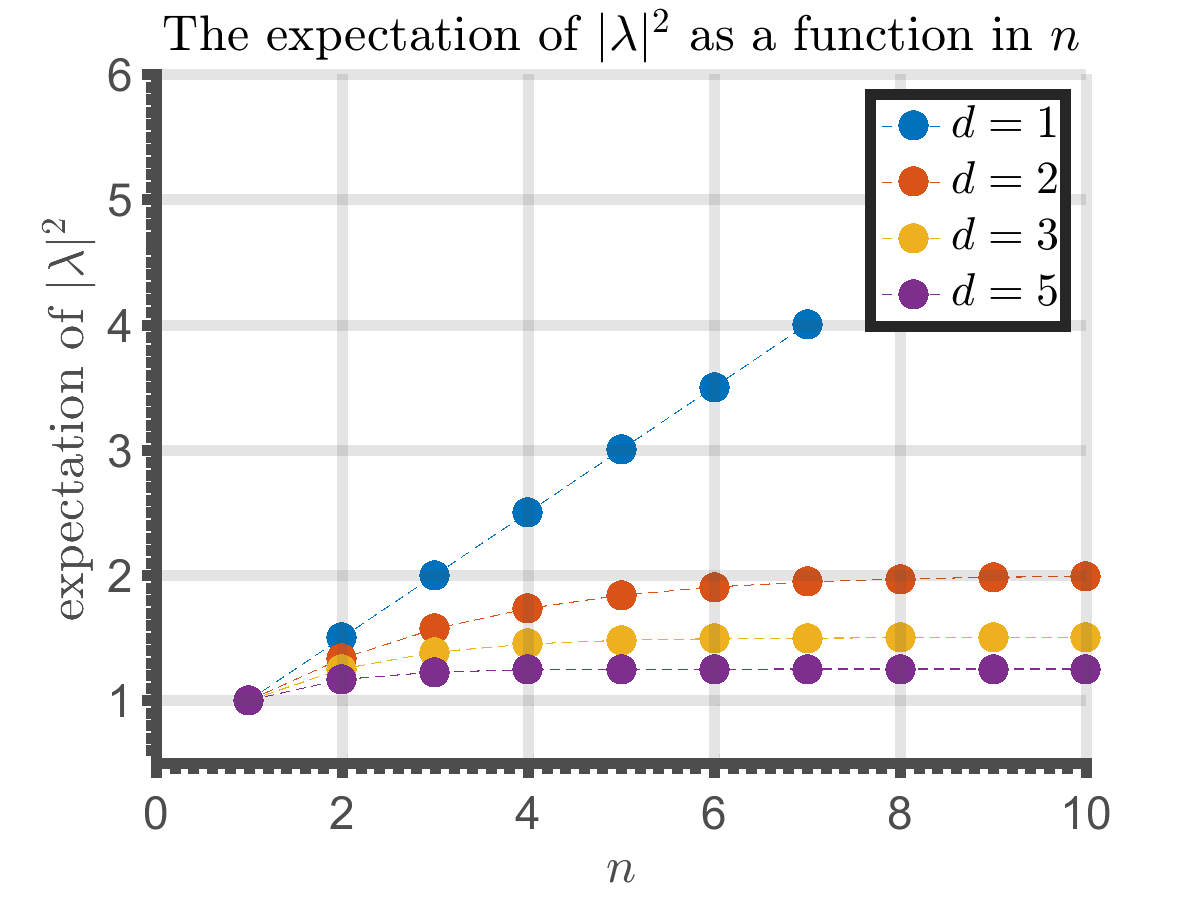}
  \end{minipage}
  }
	\caption{\small The left picture shows plots of $d\mapsto\mean\limits_{\lambda\sim\rho^{n,d}}\lvert\lambda\rvert^2$ for $n\in\{2,3,5,10\}$. 
	On the right are plots of $n\mapsto\mean\limits_{\lambda\sim\rho^{n,d}}\lvert\lambda\rvert^2$ for $d\in\{1,2,3,5\}$.}
	\label{fig1}
\end{figure}
In order to investigate $\rho_\HR^{n,1}$ for large $n$, we can normalize $\lvert\lambda\rvert^2$ by dividing it by its expectation. 
We will, however, divide $\lvert\lambda\rvert^2$ by $n$. While for large~$n$ this does not make a big difference, the formulas appearing are easier to understand. 
We will also normalize in the case $d>1$. So if $d=1$, we put
	\[\tau:= \frac{\lvert\lambda\rvert^2}{n}=\frac{R}{2n},\]
and if $d>1$, we put
	\begin{equation}\label{tau}
	\tau:= \frac{\lvert\lambda\rvert^2}{2\lim\limits_{n\to\infty} \mean\limits_{\lambda\sim \rho^{n,d}}\lvert\lambda\rvert^2} = \frac{R(d-1)}{4d}.
	\end{equation}
Making a change of variables from $R$ to $\tau$ yields the \emph{normalized density}, denoted by $\rho^{n,d}_\textbf{norm}$. 
In \cite{ginibre} Ginibre notes that in the case $d=1$ we have
	\begin{equation}
	\lim\limits_{n\to\infty}\rho^{n,1}_\textbf{norm}(\tau) =  \mathbf{1}_{[0,1]}(\tau):=
        \begin{cases} 1, & \text{ if } 0\leq \tau\leq 1\\ 0, & \text{ else}\end{cases}\label{ginibre}
	\end{equation}
This means that the distribution of the normalized eigenvalue $\lambda/\sqrt{n}$ converges towards the uniform distribution on 
the unit ball $\cset{x\in\HC}{\lvert x\rvert \leq 1}$. 

Our third result 
covers the case $d>1$.

\begin{thm}\label{third_main_result}
Let $d>1$ be fixed. For any $\tau\geq 0$ we have
	\[\lim_{n\to \infty} \rho^{n,d}_\textbf{norm}(\tau) =2\, e^{-2\tau}.\]
Hence, as $n\to\infty$, the normalized density $ \rho^{n,d}_\textbf{norm}(\tau)$ converges towards the exponential distribution 
with parameter $2$ (cf.~Figure~\ref{fig6}).
\end{thm}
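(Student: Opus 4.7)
The plan is to start from the explicit formula in \Cref{main_thm}\eqref{p2}, perform the change of variables $R\mapsto\tau=R(d-1)/(4d)$ as in \eqref{tau}, and then recognize the resulting finite sum as a truncated exponential series.

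First I would write the density of $\tau$ as
\[
\rho^{n,d}_{\textbf{norm}}(\tau) = \frac{4d}{d-1}\,\rho^{n,d}_\HR\!\left(\tfrac{4d\tau}{d-1}\right)
= \frac{4d}{d^n-1}\,e^{-2d\tau/(d-1)}\sum_{k=1}^{n} d^{n-k}\,\frac{(4d\tau/(d-1))^{k-1}}{2^k(k-1)!},
\]
using the definition of $\chi^2_{2k}$. Reindexing by $j=k-1$ and collecting the powers of $d$ and $2$ inside the sum, a short calculation reduces this to
\[
\rho^{n,d}_{\textbf{norm}}(\tau) = \frac{2d^n}{d^n-1}\,e^{-2d\tau/(d-1)}\sum_{j=0}^{n-1}\frac{1}{j!}\left(\frac{2\tau}{d-1}\right)^{j}.
\]

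Next I would pass to the limit $n\to\infty$. The prefactor satisfies $\tfrac{2d^n}{d^n-1}\to 2$ since $d>1$, and the partial sum of the exponential series converges pointwise to $e^{2\tau/(d-1)}$. Multiplying the two exponentials gives
\[
e^{-2d\tau/(d-1)}\cdot e^{2\tau/(d-1)} = e^{-2\tau(d-1)/(d-1)} = e^{-2\tau},
\]
so the overall limit is $2e^{-2\tau}$, as claimed.

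There is no real obstacle: the argument is essentially a bookkeeping exercise. The only thing worth flagging is that the Jacobian $4d/(d-1)$ of the change of variables conspires with the factor $d-1$ in the statement of \Cref{main_thm}\eqref{p2} and with the powers $d^{n-k}$ inside the sum so that the resulting exponent $2\tau/(d-1)$ in the partial sum is exactly what is needed to cancel all but $-2\tau$ in the exponent of the prefactor. This is also the point at which one sees why the normalization \eqref{tau} is the correct one.
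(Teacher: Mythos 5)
Your proposal is correct and follows essentially the same route as the paper: change variables to $\tau$ via the Jacobian $4d/(d-1)$, rewrite the mixture as the prefactor $\tfrac{2d^n}{d^n-1}e^{-2d\tau/(d-1)}$ times a truncated exponential series in $\tfrac{2\tau}{d-1}$, and let $n\to\infty$ so the prefactor tends to $2$ and the partial sums tend to $e^{2\tau/(d-1)}$, giving $2e^{-2\tau}$. The only cosmetic difference is that the paper substitutes $q=1/d$ before taking the limit, which changes nothing in substance.
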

\begin{figure}[h]
	\centerline{
	\begin{minipage}{0.5\textwidth}
  \includegraphics[width=\textwidth]{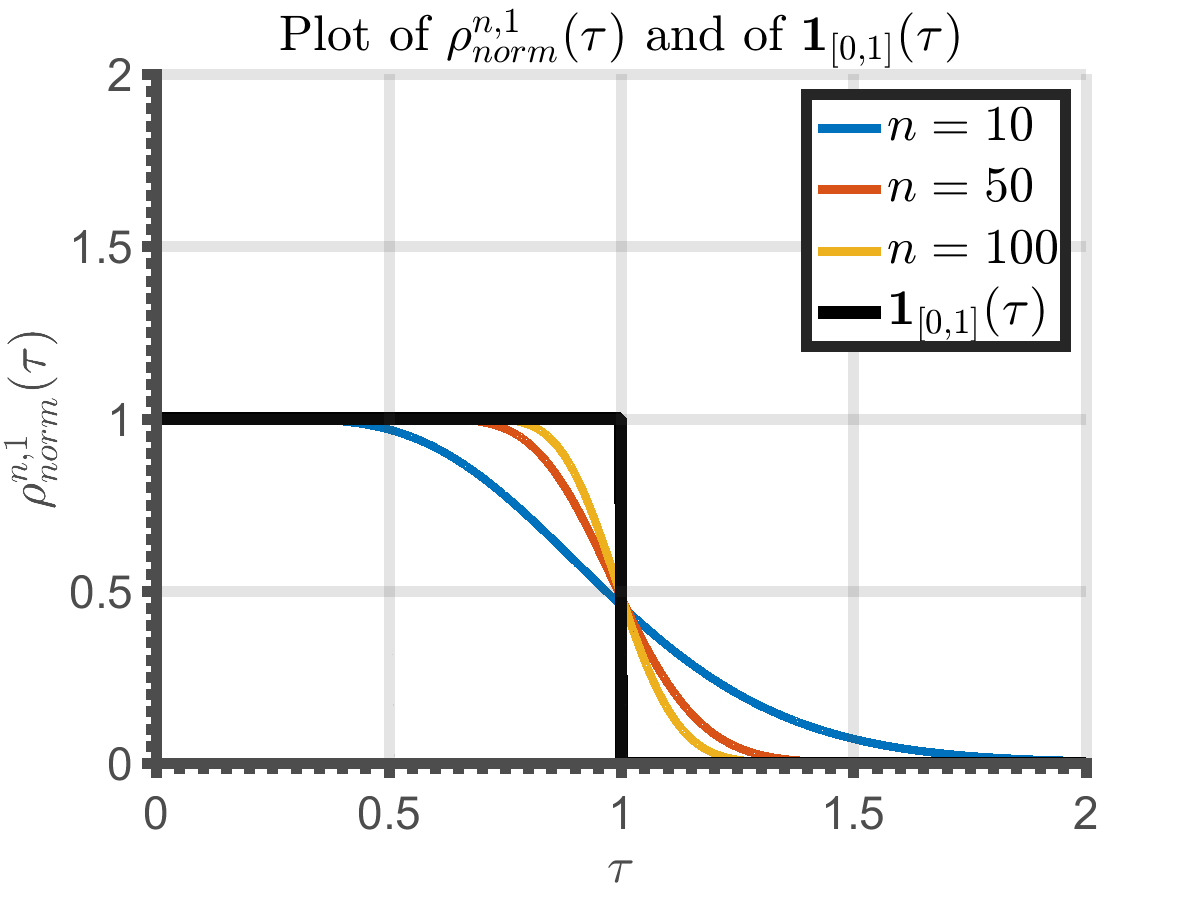}
  \end{minipage}\hfill
  \begin{minipage}{0.5\textwidth}
  \includegraphics[width=\textwidth]{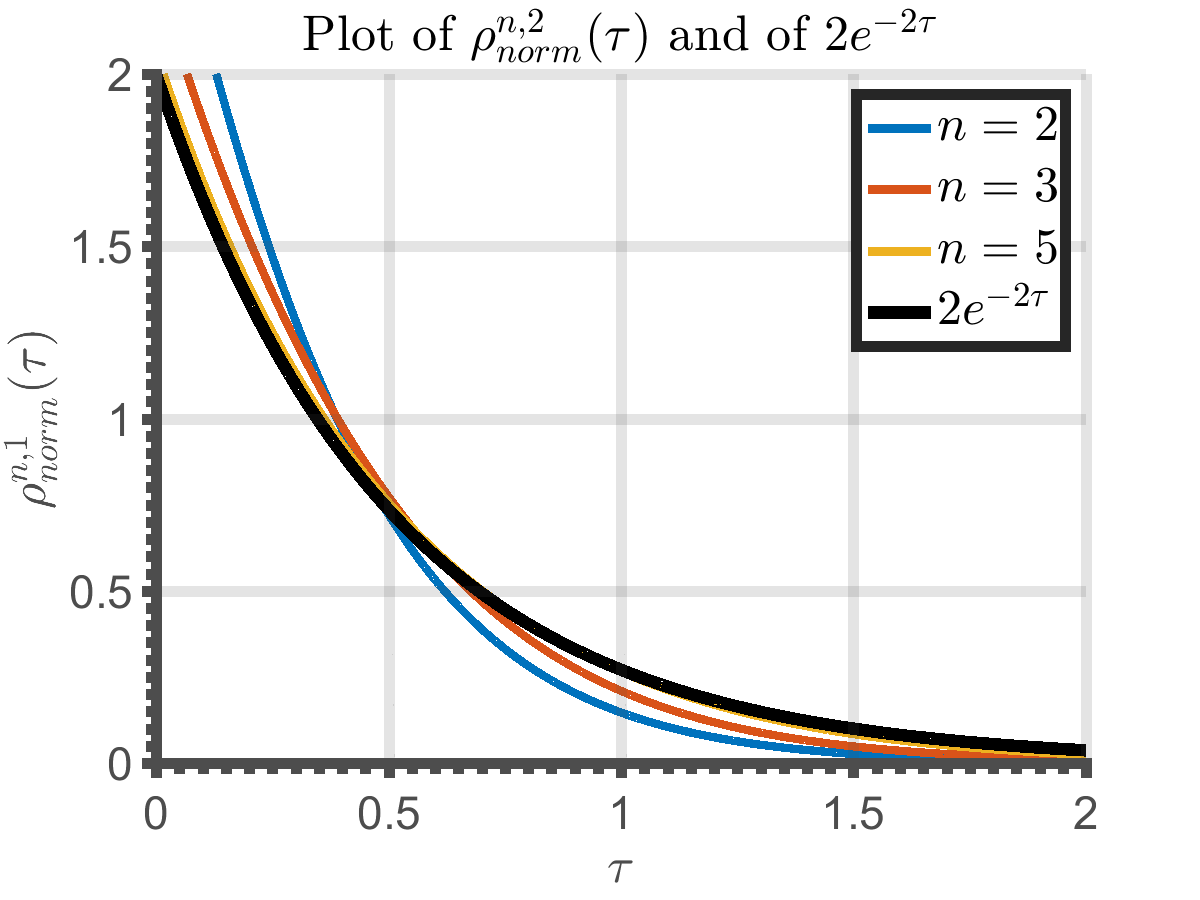}
  \end{minipage}
}
	\caption{\small The picture on the left shows plots of $\rho^{n,1}_\textbf{norm}(\tau)$ for $n\in\{10,50,100\}$ together with $\mathbf{1}_{[0,1]}(\tau)$. On the right are plots of $\rho^{n,2}_\textbf{norm}(\tau)$ for $n\in\{2,3,5\}$ together with $2e^{-2\tau}$.}
	\label{fig6}
\end{figure}

\subsection{Relation to prior work}

Our definition of eigenpairs is inspired by the following definition of E-eigenvalues of tensors and supermatrices 
given by Qi in \cite[sec.~2-3]{qi1} and \cite[sec.~1]{qi2}. 

Let $\Phi$ be a multilinear map $(\HR^n)^ d \to \HR^ n$, that is represented by the real supermatrix $A=(A_{j,i_1\ldots i_d})$. For $v\in\HR^n$, 
Qi puts $A\, v^d:= A(v,\ldots,v)$ (see \cite[sec.~3, eq.~(7)]{qi1}) and then defines $\lambda\in\HC$ to be an E-eigenvalue of $A$ 
if there exists $v\in \HC^n$ such that
$A \, v^d = \lambda v$ and $v^Tv=1$.
This definition of eigenvalue is independent of the change of orthonormal coordinates. Therefore, $\lambda$ can be regarded as an eigenvalue of $\Phi$ itself. Although assuming $\Phi$ over the reals, Qi allows the eigenvalue to be complex. If the eigenvalue $\lambda$ is real, he calls it a Z-eigenvalue. 

In \cite{sturmfels-cartwright} Cartwright and Sturmfels relax the  definition of Qi by considering order-$(d+1)$ tensors/supermatrices over the complex numbers $\HC$. They define a pair $(v,\lambda)\in (\HC^n\backslash\set{0})\times~\HC$ to be an eigenpair of~$A$, if
	 \begin{equation}\label{eigenvalue_supermatrix}
	  A \, v^d = \lambda v.
	 \end{equation}
Observe that Qi's condition $v^Tv=1$ implies that $v\neq 0$, while Sturmfels and Cartwright require the eigenpair to be an element in $(\HC\backslash\set{0})\times \HC$. In reference to Qi they call an eigenpair $(v,\lambda)$ satisfying $v^Tv=1$ normalized, whereas we call an eigenpair $(v,\lambda)$ normalized, if it satisfies $v^T\overline{v}=1$. In fact,~$A \, v^d$ is a system of homogeneous polynomials over $\HC$ in the entries of~$v$. So \Cref{eigenvalue_supermatrix} coincides with our definition.

Another approach to define eigenpairs of homogeneous polynomial systems is given by Lim \cite{lim1} in his variational approach, which is as follows. 
 
We denote by $\lVert \, \rVert_k$ the $\cL_k$-norm on $\HR^n$ for $k>1$.   
Suppose that~$F(X)$ is a real homogeneous polynomial in $n$ variables $X=(X_1,\ldots,X_n)$ of degree $d+1$. In order to optimize $F$ on the $\cL_k$-sphere $\set{\lVert x\rVert_k=1}$, one can consider the Langrangian of the multilinear Rayleigh quotient $F(X)/\lVert X\rVert_k^{d+1}$, that is $L(X,\Lambda):=F(X)-(d+1)^ {-1}\Lambda\, (\lVert X\rVert_k^{d+1}-1),$
where $\Lambda$ is an auxiliary variable. Then the equation $\nabla L=0$ gives
	\begin{equation}\label{lim}
	\nabla F(X) = \Lambda \, \begin{pmatrix} \mathrm{sgn}(X_1)^k\, X_1^{k-1}\\\vdots\\\mathrm{sgn}(X_n)^k\, X_n^{k-1} \end{pmatrix},\quad  \lVert X\rVert_k=1.
	\end{equation}
Note that $\nabla F(X)$ is a system of homogeneous polynomials of degree $d$. If the pair $(v,\lambda)\in \{\lVert v\rVert_k=1\}\times \HR$ is a solution of equation \Cref{lim}, Lim calls $v$ an $\cL^k$-eigenvector and $\lambda$ an $\cL^k$-eigenvalue of the system $\nabla F$. In particular, if~$k=2$, the $\cL_2$-eigenvalues $(v,\lambda)\in \{\lVert v\rVert_2=1\}\times \HR$ satisfy 
	\begin{equation*}
	\nabla F(v) = \lambda \, v,\quad  \lVert v\rVert_2=1.
	\end{equation*}
If we relax the definition of $\cL_2$-eigenvalues by allowing $(v,\lambda)$ to be complex, the pair $(v,\lambda)$ is an eigenpair of the system $\nabla F(X)$ 
in our sense.

\medskip

The organization of the paper is as follows. 
After some preliminaries presented in the next section, we establish in Section~\ref{se:geo-framework}
the geometric framework for the eigenpair problem. 
Our concepts and notations are close to the ones from \cite[sec. 16]{condition}. 
We define a probability distribution on $\cset{(f,v,\lambda)}{f(v)=\lambda v}$, the solution manifold.
The pushforward measure of this distribution with respect to the projection onto the space of eigenvalues 
is precisely $\rho^{n,d}$. Finally, we prove the stated results in Section~\ref{se:proofs}.  

\medskip
\noindent{\bf Acknowledgements}
The basis of this work was laid during the program "Algorithms and Complexity in Algebraic Geometry" at the Simons Institute for the Theory of Computing. We are grateful for the Simons Institute for the stimulating environment and the financial support.
\section{Preliminaries}
\subsection{Differential geometry}
We denote by $\langle x,y\rangle := x^T\overline{y}$ the standard hermitian inner product on $\HC^{n}$. 
Furthermore, we set $\lVert x\rVert := \sqrt{\langle x,x\rangle}$ and 
$\HS(\HC^n):=\big\{x\in\HC^n \mid \lVert x\rVert =1\big\}$. 
Given some $x\in\HC^n\backslash\set{0}$ we denote by
$T_x := \big\{y\in \HC^n \mid \langle x,y\rangle =0 \big\}$ 
the orthogonal complement of $x$ in $\HC^n$.

If $M$ is a differentiable manifold and $x\in M$ we denote by
$T_x M$ the tangent space of $M$ at $x$.
\begin{lemma}\label{tangentspace_sphere}
Let $v\in\HS(\HC^n)$. Then 
$T_v\HS(\HC^n)=\big\{a\in\HC^n \mid \Re \langle a,v\rangle =0\big\} = T_v \oplus \HR iv$ 
and this composition is orthogonal with respect to inner product on
$T_v\HS(\HC^n)$, that is induced from $\langle\: ,\rangle$. 
\end{lemma}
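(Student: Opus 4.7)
The plan is to identify the sphere $\HS(\HC^n)$ as a level set and compute its tangent space by the regular value theorem. Let $\phi\colon\HC^n\to\HR$, $\phi(x):=\|x\|^2$. Viewing $\HC^n$ as a real vector space of dimension $2n$, the function $\phi$ is smooth with $\HR$-linear differential $\mathrm{d}\phi_v(a) = \langle a,v\rangle + \langle v,a\rangle = 2\Re\langle a,v\rangle$, which is surjective (test against $a = v$, giving $2$). Hence $\HS(\HC^n) = \phi^{-1}(1)$ is a smooth real hypersurface and
	\[T_v\HS(\HC^n) \;=\; \ker\mathrm{d}\phi_v \;=\; \{a\in\HC^n : \Re\langle a,v\rangle = 0\},\]
which yields the first asserted equality.

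For the direct sum decomposition, I first check both inclusions. Using hermitian symmetry $\langle v,y\rangle = \overline{\langle y,v\rangle}$, the condition $y \in T_v$ is equivalent to $\langle y,v\rangle = 0$, which forces $\Re\langle y,v\rangle = 0$; and $\langle riv,v\rangle = ri\,\|v\|^2 = ri$ is purely imaginary, so $\HR iv \subset T_v\HS(\HC^n)$. Conversely, given $a \in T_v\HS(\HC^n)$, write $\langle a,v\rangle = i\beta$ for some $\beta\in\HR$ and set $y := a - i\beta v$. A direct computation gives $\langle y,v\rangle = i\beta - i\beta\|v\|^2 = 0$, so $y\in T_v$, and $a = y + \beta(iv)$ is the desired decomposition. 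Uniqueness follows because $riv \in T_v$ forces $0 = \langle riv,v\rangle = ri$, hence $r=0$.

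Finally, orthogonality with respect to the real inner product $\Re\langle\cdot,\cdot\rangle$ induced on the real tangent space is a one-line computation: for $y \in T_v$ and $riv \in \HR iv$,
	\[\langle y, riv\rangle \;=\; y^T\overline{riv} \;=\; -ri\,\langle y,v\rangle \;=\; 0,\]
so in particular $\Re\langle y, riv\rangle = 0$. The only possible pitfall is keeping conjugations consistent under the convention $\langle x,y\rangle = x^T\overline{y}$; conceptually the lemma just expresses that the real tangent space of the complex unit sphere splits into the complex orthogonal complement $T_v$ of the base point and the one-real-dimensional tangent to the $S^1$-orbit $\{e^{i\theta}v : \theta\in\HR\}$.
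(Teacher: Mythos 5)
Your proof is correct. Note that the paper does not actually prove this lemma: it simply cites \cite[Eq.\ (14.11)]{condition} and \cite[Lemma 14.9]{condition}, so your argument is not an alternative to a proof in the paper but rather a self-contained verification of exactly the facts being cited. Your route is the standard one: the regular value theorem applied to $x\mapsto \|x\|^2$ gives $T_v\HS(\HC^n)=\{a : \Re\langle a,v\rangle=0\}$, and the splitting into $T_v$ (the complex orthogonal complement of $v$) and $\HR iv$ (the tangent direction of the orbit $\{e^{i\theta}v\}$) is obtained by the explicit decomposition $a=(a-i\beta v)+\beta(iv)$ with $i\beta=\langle a,v\rangle$, together with $T_v\cap\HR iv=\{0\}$. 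All the conjugation bookkeeping under the convention $\langle x,y\rangle=x^T\overline y$ is handled consistently, and for orthogonality you correctly interpret the induced (real) inner product on the tangent space as $\Re\langle\cdot,\cdot\rangle$, in fact establishing the stronger statement $\langle y,riv\rangle=0$ for $y\in T_v$. No gaps.
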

\begin{proof}
See \cite[Equation (14.11)]{condition} and \cite[Lemma 14.9]{condition}.
\end{proof}
If $M$ and $N$ are differentiable manifolds and $F\colon M\to N$ is
differentiable, we denote by $DF(x):T_x M \to T_{F(x)}N$ its derivative at $x\in M$ and by $NJ(F)(x)$ its normal jacobian at $x$.

For more details on normal jacobians and the coarea formula we refer to \cite[sec. 17.3]{condition}.
\begin{thm}[Coarea formula]\label{coarea_formula}
Suppose that $M,N$ are Riemannian manifolds of dimensions $m,n$, respectively. Let $\Psi: M\to N$ be a surjective smooth map. Then we have for any function $\chi: M\to \HR$ that is integrable with respect to the volume measure of $M$ that
	\[\int_M \chi \d M = \int_{y\in N} \left[ \int_{\Psi^{-1}(y)} \frac{\chi}{NJ(\Psi)} \d \Psi^{-1}(y)\right] \d N.\]
\end{thm}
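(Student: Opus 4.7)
The plan is to reduce Theorem~\ref{coarea_formula} to the elementary case of a surjective linear map between Euclidean spaces via a partition-of-unity argument. First, by Sard's theorem the image of the critical set of $\Psi$ has measure zero in $N$, so we may restrict attention to regular points of $\Psi$; on the critical set $NJ(\Psi)=0$, but such points contribute nothing to either side under the usual convention that $\chi/NJ(\Psi)$ is interpreted only on the regular locus.

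Next, at any regular point $x\in M$, the submersion theorem supplies charts $\varphi\colon U\to\HR^m$ around $x$ and $\psi\colon V\to\HR^n$ around $\Psi(x)$ such that $\psi\circ\Psi\circ\varphi^{-1}$ is the linear projection $\pi\colon\HR^m\to\HR^n$ onto the first $n$ coordinates. Subordinate to a locally finite open cover by such charts, choose a partition of unity $\set{\eta_\alpha}$ and decompose $\chi=\sum_\alpha\eta_\alpha\chi$; it suffices to prove the identity for each $\eta_\alpha\chi$, hence we may work in a single chart.

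In such a chart, the Riemannian volume on $M$ reads $\d M=\sqrt{\det g_M}\,\d\ell^m$, where $g_M$ is the metric tensor in coordinates and $\d\ell^m$ is Lebesgue measure; analogously $\d N=\sqrt{\det g_N}\,\d\ell^n$, and the induced volume on the fibre $\Psi^{-1}(y)$ is $\sqrt{\det g_{\mathrm{fib}}}\,\d\ell^{m-n}$. The crucial linear-algebra identity, valid pointwise on the regular locus, is
\[\sqrt{\det g_M} \ = \ NJ(\Psi)\cdot\sqrt{\det g_N}\cdot\sqrt{\det g_{\mathrm{fib}}},\]
where $NJ(\Psi)(x)=\sqrt{\det\bigl(D\Psi(x)\,D\Psi(x)^\ast\bigr)}$ is computed with respect to the Riemannian inner products on $T_xM$ and $T_{\Psi(x)}N$. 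Combining this with Fubini's theorem applied to $\pi$ yields the coarea formula locally; summing over~$\alpha$ gives the global identity.

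The main obstacle is the pointwise identity highlighted above. I would prove it by choosing at $x$ an orthonormal basis of $T_xM$ adapted to the orthogonal splitting $T_xM=\ker D\Psi(x)\oplus(\ker D\Psi(x))^\perp$ together with an orthonormal basis of $T_{\Psi(x)}N$, and verifying that the restriction of $D\Psi(x)$ to $(\ker D\Psi(x))^\perp$ is an isomorphism whose determinant in these bases equals $NJ(\Psi)(x)$; the complementary block of $g_M$ reproduces $g_{\mathrm{fib}}$, while the image block reproduces $g_N$ twisted by this isomorphism. Everything else is standard measure theory; this is the approach taken in \cite[sec.~17.3]{condition}, to which we refer for the remaining details.
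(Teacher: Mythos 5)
Your overall strategy -- Sard's theorem, the local submersion normal form, a partition of unity, and a pointwise volume factorization fed into Fubini -- is the standard route (it is essentially how the cited reference \cite[sec.~17.3]{condition} proceeds; the paper itself does not prove the theorem but simply quotes it). However, your ``crucial linear-algebra identity'' is stated upside-down, and the whole local computation hinges on it. The correct pointwise relation in a chart adapted to the projection is
\[
NJ(\Psi)\,\sqrt{\det g_M} \;=\; \sqrt{\det g_N}\,\sqrt{\det g_{\mathrm{fib}}},
\qquad\text{i.e.}\qquad
\sqrt{\det g_M}=\frac{\sqrt{\det g_N}\,\sqrt{\det g_{\mathrm{fib}}}}{NJ(\Psi)} .
\]
With your version (with $NJ(\Psi)$ multiplying the right-hand side), Fubini delivers $\int_M\chi\,\d M=\int_{y\in N}\int_{\Psi^{-1}(y)}\chi\, NJ(\Psi)\,\d\Psi^{-1}(y)\,\d N$, which differs from the statement of the theorem by a factor $NJ(\Psi)^2$ in the fibre integrand. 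A one-dimensional sanity check already detects this: take $M=N=\HR$ with the standard metric, $\Psi(x)=2x$, and the chart $u=2x$ on $M$; then $\sqrt{\det g_M}=\tfrac12$, $\sqrt{\det g_N}=1$, the fibres are points, and $NJ(\Psi)=2$, so your identity would read $\tfrac12=2$.

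The source of the inversion is in your final paragraph: in the adapted coordinates the horizontal coordinate directions are in general not orthogonal to the fibres, and the horizontal block of $g_M$ is not ``$g_N$ twisted by'' $D\Psi|_{(\ker D\Psi)^\perp}$ in the multiplicative sense you use. Writing $g_M=\begin{pmatrix} A & B\\ B^{T} & C\end{pmatrix}$ with $C=g_{\mathrm{fib}}$, one has $\det g_M=\det C\cdot\det\left(A-BC^{-1}B^{T}\right)$, while $D\Psi\,D\Psi^{\ast}$ has coordinate matrix $(g_M^{-1})_{11}\,g_N=\left(A-BC^{-1}B^{T}\right)^{-1}g_N$, hence $NJ(\Psi)=\sqrt{\det g_N}\,/\sqrt{\det\left(A-BC^{-1}B^{T}\right)}$. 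In words, $D\Psi$ expands horizontal volume by the factor $NJ(\Psi)$, so the horizontal contribution to $\sqrt{\det g_M}$ is $\sqrt{\det g_N}/NJ(\Psi)$, not $NJ(\Psi)\sqrt{\det g_N}$. Once the identity is corrected, the rest of your sketch (discarding critical values via Sard -- with the usual proviso that the formula is applied where the critical set is negligible -- partition of unity, Fubini) goes through and coincides with the standard proof.
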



Let $E_1,E_2$ be finite dimensional complex vector spaces with hermitian inner product, such that $\dim_\HC E_1 \geq \dim_\HC E_2$. Assume that we have a surjective linear map $\phi:E_1 \to E_2$ (think of $\phi$ as a derivative and $E_1,E_2$ being tangent spaces).  Let $\Gamma(\phi):=\set{(x,\phi(x))\in E_1\times E_2}$ be the graph of $\phi$. Then $\Gamma(\phi)$ is a linear space and the projections $p_1: \Gamma(\phi)\to E_1$ and $p_2: \Gamma(\phi)\to E_2$ are linear maps.

The following result is Lemma 3 in \cite[sec. 13.2]{BSS}, combined with the comment in Theorem 5 in \cite[sec. 13.2]{BSS}. 
\begin{lemma}\label{NJ_smale_lemma}
Let $W$ be the orthogonal complement of $\ker p_2$. Then we have \[\frac{\lvert\det (p_1)\rvert}{\lvert \det (p_2|_W)\rvert}= \lvert\det(\phi\phi^*)\rvert^{-1}.\]
\end{lemma}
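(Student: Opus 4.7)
The plan is to exploit the orthogonal decomposition $\Gamma(\phi) = \ker p_2 \oplus W$ and align it with the orthogonal decomposition $E_1 = \ker\phi \oplus (\ker\phi)^\perp$. First I would identify $\ker p_2 = \set{(y,0): y\in\ker\phi}$, which is immediate since $(x,\phi(x))\in\ker p_2$ forces $\phi(x)=0$. An element $(x,\phi(x))$ lies in $W$ iff it is orthogonal to every $(y,0)$ with $y\in\ker\phi$, that is iff $\langle x,y\rangle_{E_1}=0$ for all such $y$; thus $W = \set{(x,\phi(x)): x\in(\ker\phi)^\perp}$.

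Next I would use that $p_1$ maps $\ker p_2 \oplus W$ onto $\ker\phi \oplus (\ker\phi)^\perp$ in a block-wise fashion, and that $p_1|_{\ker p_2}\colon (y,0)\mapsto y$ is an isometry (because $\|(y,0)\|^2 = \|y\|_{E_1}^2$ in the induced inner product on $\Gamma(\phi)$). Picking orthonormal bases adapted to both decompositions makes the matrix of $p_1$ block diagonal with identity block on $\ker p_2$, so $|\det p_1| = |\det(p_1|_W)|$. On $W$, the factorization $p_2|_W = \phi_0 \circ p_1|_W$, where $\phi_0 := \phi|_{(\ker\phi)^\perp}\colon(\ker\phi)^\perp\to E_2$ is a bijection, yields $|\det(p_2|_W)| = |\det\phi_0|\cdot|\det(p_1|_W)|$. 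Dividing gives
\[
\frac{|\det p_1|}{|\det(p_2|_W)|} = \frac{1}{|\det\phi_0|}.
\]

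To conclude I would relate $|\det\phi_0|$ to $|\det(\phi\phi^*)|$: since $\mathrm{Im}(\phi^*) = (\ker\phi)^\perp$, the map $\phi^*\colon E_2\to E_1$ factors as $\phi_0^*\colon E_2\to(\ker\phi)^\perp$ followed by inclusion, whence $\phi\phi^* = \phi_0\phi_0^*$ as endomorphisms of $E_2$. Under the real-jacobian convention used in the paper — in which the jacobian of a $\HC$-linear bijection $L$ between equal-dimensional hermitian spaces is $\det(L L^*)$ — this gives $|\det\phi_0| = \det(\phi_0\phi_0^*) = \det(\phi\phi^*)$, and the desired identity follows. The main obstacle is the combinatorial bookkeeping of the two orthogonal splittings together with the observation that $p_1|_{\ker p_2}$ is an isometry; once these are in place the computation is routine.
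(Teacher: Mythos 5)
Your proof is correct, and it differs from the paper's treatment only in that the paper gives no argument at all: Lemma \ref{NJ_smale_lemma} is justified there purely by citing Lemma 3 and the comment at Theorem 5 in \cite[sec.~13.2]{BSS}. Your self-contained argument --- identifying $\ker p_2=\set{(y,0):y\in\ker\phi}$ and $W=\set{(x,\phi(x)):x\in(\ker\phi)^\perp}$, noting that $p_1$ is block-diagonal with respect to the two orthogonal splittings with an isometric block on $\ker p_2$, and factoring $p_2|_W=\phi_0\circ p_1|_W$ with $\phi_0:=\phi|_{(\ker\phi)^\perp}$ a bijection --- is exactly the computation the citation replaces, and every step checks out, including $\phi\phi^*=\phi_0\phi_0^*$ via $\mathrm{Im}\,\phi^*=(\ker\phi)^\perp$. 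The one point that genuinely needs to be made explicit, and which you do address, is the determinant convention: the identity as stated is consistent only if $\lvert\det p_1\rvert$ and $\lvert\det(p_2|_W)\rvert$ denote the Jacobians of the underlying real maps while $\det(\phi\phi^*)$ is the complex determinant; with complex determinants throughout, the right-hand side would have to read $\det(\phi\phi^*)^{-1/2}$, as the example $E_1=E_2=\HC$, $\phi(x)=cx$ already shows ($1/\lvert c\rvert$ versus $1/\lvert c\rvert^2$). Your reading is the one consistent with how the lemma is used in \Cref{quotient_NJ}, where the ratio of (real) normal Jacobians comes out as $\lvert\det DF_f|_{T_v\times\HC}\rvert^2$, i.e.\ the complex determinant squared. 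So what your route buys is transparency about precisely this convention, plus independence from an external reference; what the paper's citation buys is brevity and direct alignment with the BSS normal-Jacobian formalism it then invokes.
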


\subsection{Expectation of the truncated geometric distribution}

The \emph{geometric distribution with parameter $p$ truncated at} $n\geq 1$ is defined to be the distribution 
of a geometrically distributed random variable $X$ with parameter $p$ under the condition that $X\leq n$. Its density is
	\begin{equation}\label{truncated_dfn}
	\Prob\limits_{X\sim\mathrm{Geo}(p)}\cset{X=k}{X\leq n} =
        \frac{\Prob\limits_{X\sim\mathrm{Geo}(p)}\set{X=k}}{\Prob\limits_{X\sim\mathrm{Geo}(p)}\set{X\leq  n}}  
          = \frac{q^{k-1}(1-q)}{1-q^n},
	\end{equation}
where $q:=1-p$ and $k\in\set{1,\ldots,n}$.
\begin{lemma}\label{expectation_truncatedgeo}
Let $n\geq 1$ and $0\leq q< 1$. Then
	\[\mean\limits_{X\sim \mathrm{Geo}(1-q)}[X\mid X\leq n] = \frac{nq^{n+1} -(n+1)q^n +1}{(1-q^n)(1-q)}.\]
\end{lemma}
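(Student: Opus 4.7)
The plan is to compute the conditional expectation directly from the density \eqref{truncated_dfn}, reducing the claim to a standard identity for the derivative of a finite geometric series.

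First I would write
\[
\mean\limits_{X\sim \mathrm{Geo}(1-q)}[X\mid X\leq n] = \sum_{k=1}^n k \cdot \Prob\limits_{X\sim\mathrm{Geo}(1-q)}\cset{X=k}{X\leq n} = \frac{1-q}{1-q^n}\sum_{k=1}^n k\,q^{k-1},
\]
so the entire problem reduces to evaluating $S_n(q) := \sum_{k=1}^n k q^{k-1}$ in closed form.

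The standard trick is to recognize $S_n(q) = \frac{d}{dq}\sum_{k=0}^n q^k = \frac{d}{dq}\!\left(\frac{1-q^{n+1}}{1-q}\right)$. Differentiating via the quotient rule and simplifying the numerator gives
\[
S_n(q) = \frac{-(n+1)q^n(1-q) + (1-q^{n+1})}{(1-q)^2} = \frac{1 - (n+1)q^n + nq^{n+1}}{(1-q)^2}.
\]

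Substituting this back yields
\[
\frac{1-q}{1-q^n}\cdot\frac{1-(n+1)q^n+nq^{n+1}}{(1-q)^2} = \frac{nq^{n+1}-(n+1)q^n+1}{(1-q^n)(1-q)},
\]
which is exactly the claimed formula. There is no real obstacle here; the only care needed is to verify that $q<1$ ensures the denominators do not vanish, and that the differentiation identity is valid as a polynomial identity in $q$ so one need not worry about interchange of operations. Since the step is purely algebraic, the proof should be a short direct calculation.
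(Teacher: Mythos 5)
Your proof is correct and follows essentially the same route as the paper: both write the conditional expectation as $\frac{1-q}{1-q^n}\sum_{k=1}^n k q^{k-1}$ and evaluate the sum as the derivative of the finite geometric series $\frac{1-q^{n+1}}{1-q}$. The only difference is that you carry out the quotient-rule simplification explicitly, which the paper states without detail.
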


\begin{proof}
We have $\Prob\limits_{X\sim \mathrm{Geo}(1-q)}\cset{X = k}{X\leq n} = \frac{q^{k-1}(1-q)}{1-q^n}$, $k\in\set{1,\ldots,n}$. This implies \[\mean\limits_{X\sim \mathrm{Geo}(1-q)}[X \mid X\leq n]=\frac{1-q}{1-q^n}\, \sum_{k=1}^{n} kq^{k-1}.\] Observe, that $\sum_{k=1}^{n} kq^{k-1}$ is the derivative of $\frac{1-Z^{n+1}}{1-Z}$ at $Z=q$ and that
\begin{align*}
	\frac{\d\;}{\d Z} \left(\frac{1-Z^{n+1}}{1-Z}\right)=\frac{nZ^{n+1} - (n+1)Z^n +1 }{(1-Z)^2}	
\end{align*}
Hence the claim.
\end{proof}

\subsection{The expected characteristic polynomial of a random matrix}\label{sec_expected}

We say that a random variable $z$ on $\HC$ is \emph{standard normal distributed} if both real and imaginary part of $z$ 
are i.i.d centered normal distributed random variables with variance $\sigma^2=\frac{1}{2}$. The corresponding density is
	\begin{align*}
	\varphi(z) := \frac{1}{\pi}\, \exp\left(-\lvert z\rvert^2\right),
	\end{align*}
and we write $z\sim N(0,\frac{1}{2})$ for this distribution. The
reason why we have put $\sigma^2=\frac{1}{2}$ is that 
for a gaussian random variable $z\sim N(0,\frac{1}{2})$ on $\HC$ we have
	\begin{equation}\label{chi_square}\mean\limits_{z\sim N(0,\frac{1}{2})}\lvert z\rvert^2 = 1.
	\end{equation}

Suppose that $E$ is a finite dimensional complex vector space with hermitian inner product and let $k:=\dim_\HC E$. 
We define the standard normal density on the space $E$ as 	
	\begin{equation}\label{varphi_E}
	\varphi_E(z) :=   \frac{1}{\pi^k}\, \exp\left(-\lVert z\rVert^2\right).
	\end{equation}
 it is clear from the context which space is meant, we omit the subscript~$E$. Let $I_{n}$ be the $n\times n$ identity matrix. 
If a complex matrix $A\in\HC^{n\times n}$ is distributed with density $\varphi_{\HC^{n\times n}}$, we write $A\sim N(0,\frac{1}{2} I_{n})$. 

Recall that the Gamma function is defined by 
$\Gamma(n):= \int_{t=0}^\infty t^{n-1} e^{-t} \d t$ for a positive
real number $n >0$. It is well known that $\Gamma(n)=(n-1)!$ if
$n$ is a positive integer. 
The \emph{upper incomplete Gamma function} is defined as 
\[\Gamma(n,x):= \int_{t=x}^\infty t^{n-1} e^{-t} \d t,\]where $x\geq 0$. 
 
\begin{lemma}\label{auxiliary_lemma}~
\begin{enumerate}
\item 
Let $x\geq 0$ and $n\geq 1$. Then $\Gamma(n,x) = (n-1)! \, e^{-x} \sum\limits_{k=0}^{n-1} \frac{x^k}{k!}$.
\item We have $\mean\limits_{A\sim N\left(0,\frac{1}{2}I_{n}\right)} \lvert \det(A)\rvert^2 = n! = \Gamma(n-1).$
\item  For $I\subset[n]:=\set{1,\ldots,n}$ we define $A_I\in\HC^{\lvert I\rvert \times \lvert I\rvert}$ to be the submatrix of $A\in\HC^{n\times n}$
indexed by $I$. Then for any $t\in \HC$  we have that $\det\left(A+tI_{n}\right) =  \sum_{I\subset[n]} t^{n-\lvert I\rvert} \det A_I.$
	\end{enumerate}
\end{lemma}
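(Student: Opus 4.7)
The plan is to dispatch the three parts in order, each by a short self-contained computation.

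For part (1), I would induct on $n$. Integration by parts applied to $\Gamma(n,x)=\int_x^\infty t^{n-1}e^{-t}\,\d t$ with $u=t^{n-1}$ and $\d v=e^{-t}\,\d t$ yields the recursion $\Gamma(n,x)=x^{n-1}e^{-x}+(n-1)\Gamma(n-1,x)$, and the base case $\Gamma(1,x)=e^{-x}$ is immediate. Unrolling the recursion produces the claimed closed form.

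For part (2), I expand $\lvert\det A\rvert^2=\det(A)\,\overline{\det(A)}$ via the Leibniz formula,
\[
\lvert\det A\rvert^2=\sum_{\sigma,\tau\in S_n}\mathrm{sgn}(\sigma)\,\mathrm{sgn}(\tau)\prod_{i=1}^n A_{i,\sigma(i)}\overline{A_{i,\tau(i)}},
\]
and exploit that the entries $A_{ij}$ are i.i.d.\ $N(0,\tfrac12)$ on $\HC$ with $\mean A_{ij}=\mean\overline{A_{ij}}=0$ and $\mean\lvert A_{ij}\rvert^2=1$. By independence, the expectation of each summand factors over entries: a given entry $A_{kl}$ appears at most once as $A_{kl}$ (when $\sigma(k)=l$) and at most once as $\overline{A_{kl}}$ (when $\tau(k)=l$), and the corresponding first moments vanish unless both conjugated and unconjugated copies are present. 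Hence only the diagonal terms $\sigma=\tau$ survive, giving
\[
\mean\lvert\det A\rvert^2=\sum_{\sigma\in S_n}\prod_{i=1}^n\mean\lvert A_{i,\sigma(i)}\rvert^2=\sum_{\sigma\in S_n}1=n!,
\]
where each factor equals $1$ by \Cref{chi_square}.

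For part (3), I exploit multilinearity of the determinant in its columns. Writing $a_j$ for the $j$-th column of $A$ and $e_j$ for the $j$-th standard basis vector of $\HC^n$, we have
\[
\det(A+tI_n)=\det(a_1+te_1,\ldots,a_n+te_n)=\sum_{S\subset[n]}t^{n-\lvert S\rvert}\det M_S,
\]
where the $j$-th column of $M_S$ is $a_j$ if $j\in S$ and $e_j$ otherwise. Simultaneously permuting rows and columns of $M_S$ so that the indices of $S$ occupy the first $\lvert S\rvert$ positions transforms $M_S$ into a block triangular matrix of the form $\left(\begin{smallmatrix}A_S & 0\\ * & I\end{smallmatrix}\right)$; since the same permutation is applied to rows and columns, the two sign contributions cancel and $\det M_S=\det A_S$. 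Substituting yields the identity.

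None of the three parts presents a genuine obstacle. The subtlest point is checking in part (3) that the simultaneous row-and-column permutation introduces no sign, which it does not, since the two sign contributions are equal; and, in part (2), the bookkeeping that forces $\sigma=\tau$ inside the double sum.
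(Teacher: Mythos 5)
Your three arguments are all correct, but they are genuinely different in character from what the paper does: the paper does not prove Lemma~\ref{auxiliary_lemma} at all, it simply cites references (part (1) to Gradshteyn--Ryzhik, part (2) to \cite[Lemma 4.12]{condition}, part (3) to a standard matrix-analysis text). You instead supply short self-contained proofs: integration by parts plus induction for the incomplete Gamma identity, the Leibniz double-sum with the independence/first-moment argument forcing $\sigma=\tau$ for the second moment of the determinant (this is essentially the argument behind the cited Lemma 4.12, and your use of $\mean\lvert A_{ij}\rvert^2=1$ from \Cref{chi_square} is exactly the right normalization), and column-multilinearity plus a simultaneous row--column permutation to a block triangular form for the expansion of $\det(A+tI_n)$. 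What your route buys is that the paper becomes independent of three external sources at the cost of about a page; what the paper's route buys is brevity for facts it regards as classical. Two small remarks: the delicate points you flag (each entry occurs at most once unconjugated and at most once conjugated, so a mismatch $\sigma(i)\neq\tau(i)$ leaves a lone factor with vanishing mean; and the sign cancellation $\mathrm{sgn}(\pi)^2=1$ under simultaneous permutation) are indeed the only places where care is needed, and you handle both correctly. Also note that the statement's equality $n!=\Gamma(n-1)$ is a typo in the paper, since $n!=\Gamma(n+1)$; your proof establishes the substantive claim $\mean\lvert\det A\rvert^2=n!$, which is what is actually used later in \Cref{expectation_det}.
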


\begin{proof}
The first assertion is from \cite[p. 949]{gradshteyn}, 
the second is \cite[Lemma 4.12]{condition}, 
and the third assertion is a well known fact, cf.~\cite[Theorem 1.2.12]{johnson}.
\end{proof}

\begin{prop}\label{expectation_det}
We have for $A\in\HC^{n\times n}$ and $t\in \HC$ 
\[\mean\limits_{A\sim N\left(0,\frac{1}{2}I_{n}\right)} \lvert \det (A+tI_{n})\rvert^2 =  e^{\lvert t\rvert^2} \Gamma\left(n+1,\lvert t\rvert^2\right).\]
\end{prop}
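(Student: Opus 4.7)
The plan is to expand $\det(A + t I_n)$ using part~(iii) of \Cref{auxiliary_lemma}, then exploit the orthogonality of monomials in a complex Gaussian measure to reduce the resulting double sum to a single sum, and finally recognize it via part~(i) of \Cref{auxiliary_lemma} as $e^{|t|^2}\Gamma(n+1,|t|^2)$.

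In detail, part~(iii) of \Cref{auxiliary_lemma} gives
\[
\det(A+tI_n) \;=\; \sum_{I\subset[n]} t^{\,n-|I|}\det A_I,
\]
so
\[
\lvert\det(A+tI_n)\rvert^2 \;=\; \sum_{I,J\subset[n]} t^{\,n-|I|}\overline{t}^{\,n-|J|}\,\det A_I\,\overline{\det A_J}.
\]
Taking the expectation, the key observation is that $\det A_I$ is a homogeneous polynomial of degree $|I|$ in the entries $\{a_{ij}\}_{i,j\in I}$, and each of its monomials is multilinear and uses variables with row indices exactly in $I$. Because the entries of $A$ are i.i.d.\ standard complex Gaussians, $\mean[a_{ij}^{\,p}\,\overline{a_{k\ell}^{\,q}}] = 0$ unless $(i,j)=(k,\ell)$ and $p=q$, so $\mean\bigl[m_1\overline{m_2}\bigr]=0$ whenever $m_1,m_2$ are distinct monomials in the entries. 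Comparing row-index supports of monomials appearing in $\det A_I$ and $\det A_J$, this forces $\mean[\det A_I\,\overline{\det A_J}]=0$ whenever $I\neq J$.

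For $I=J$, part~(ii) of \Cref{auxiliary_lemma} applied to $A_I\sim N(0,\tfrac12 I_{|I|})$ yields $\mean\lvert\det A_I\rvert^2 = |I|!$. Writing $s:=|t|^2$ and grouping subsets by cardinality~$k=|I|$:
\[
\mean\limits_{A\sim N\left(0,\frac{1}{2}I_{n}\right)}\lvert\det(A+tI_n)\rvert^2 \;=\; \sum_{k=0}^{n}\binom{n}{k}s^{\,n-k}\,k! \;=\; n!\sum_{j=0}^{n}\frac{s^{\,j}}{j!},
\]
after substituting $j=n-k$. Finally, part~(i) of \Cref{auxiliary_lemma} states $\Gamma(n+1,s)=n!\,e^{-s}\sum_{j=0}^{n}\frac{s^{j}}{j!}$, so $n!\sum_{j=0}^{n}\frac{s^j}{j!}=e^{s}\,\Gamma(n+1,s)$, which is the claim.

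The main obstacle is the orthogonality step: one must justify cleanly that $\mean[\det A_I\overline{\det A_J}]=0$ for $I\neq J$. I would do this by the monomial argument above, noting that the complex Gaussian measure makes distinct monomials in $\{a_{ij},\overline{a_{ij}}\}$ orthogonal, and that the row-support of every monomial in $\det A_I$ equals $I$. Everything else is a routine reindexing of binomial sums.
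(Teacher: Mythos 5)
Your proof is correct and follows essentially the same route as the paper: expand $\det(A+tI_n)$ via \Cref{auxiliary_lemma}(3), use orthogonality to get $\mean[\det A_I\,\overline{\det A_J}]=\delta_{I,J}\,|I|!$ from \Cref{auxiliary_lemma}(2), and identify the resulting binomial sum with $e^{|t|^2}\Gamma(n+1,|t|^2)$ via \Cref{auxiliary_lemma}(1). The only difference is that you spell out the monomial/row-support argument for the vanishing of the cross terms, which the paper dismisses in one line (``since we deal with centered distributions''), so your write-up is if anything more complete.
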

\begin{proof}
By \Cref{auxiliary_lemma}(3), $\det (A+tI_{n\times n}) = \sum_{\alpha \in \set{0,1}^n} t^{n-\lvert \alpha \rvert} \det A_\alpha$, 
hence
	\[\lvert\det (A+tI_{n\times n})\rvert^2 = \sum_{\alpha,\beta} t^{n-\lvert \alpha \rvert} \, (\overline{t})^{n-\lvert \beta \rvert} \det A_\alpha \det \overline{A_\beta}.\]
Due to \Cref{auxiliary_lemma}(2), we have 
$\mean\, [\det A_\alpha \det \overline{A_\beta}] = \delta_{\alpha,\beta}\, \lvert\alpha\rvert!$, 
since we deal with centered distributions. Hence, 
\[\mean\, \lvert\det (A+tI_{n\times n})\rvert^2 = \sum_{k=0}^n \binom{n}{k}\, k!\,\lvert t\rvert^{2(n-k)} 
    = e^{\lvert t\rvert^{2}}\,\Gamma(n+1,\lvert t\rvert^{2});\]
the last equality by \Cref{auxiliary_lemma}(1).
\end{proof}


\section{Geometric framework}\label{se:geo-framework}

\subsection{Eigenpairs of homogeneous polynomial systems}


Let $n,d\geq 1$. 
We denote by $\cH_d:=\cH_d(X_1,\ldots,X_n)$ the vector space of
homogeneous polynomials of degree~$d$ in the variables
$X_1,\ldots,X_n$ over the complex numbers $\HC$ of degree $d$. 
The \emph{Bombieri-Weyl} basis is given by the~$e_\alpha:=\binom{d}{\alpha}^\frac{1}{2} X^\alpha$, $\lvert \alpha \rvert = d$. We define an inner product on $\cH_d$ via
	\begin{equation}\label{inner_product}
	\Big\langle \sum\limits_\alpha a_\alpha e_\alpha,\sum\limits_\alpha b_\alpha e_\alpha \Big\rangle := \sum\limits_\alpha a_\alpha \overline{b_\alpha}.
	\end{equation}
The product \Cref{inner_product} extends to $(\cH_d)^n$ in the following way. Let $f=(f_1,\ldots,f_n)$ and $g=(g_1,\ldots,g_n)\in(\cH_d)^n$. Then we define $\langle f,g\rangle :=\sum_{i=1}^n \langle f_i,g_i\rangle.$ Moreover, for $f\in(\cH_d)^n$ we set $\lVert f\rVert := \sqrt{\langle f,f\rangle}$. 
\begin{rem}\label{f_norm_is_frobenius}
\begin{enumerate}
\item 
The inner product \Cref{inner_product} is the unique unitary invariant product on $\cH_d$ (up to scaling). See \cite[Theorem 16.3]{condition} and \cite[Remark 16.4]{condition}.
\item Suppose that $f=(f_1,\ldots,f_n)\in(\cH_d)^n$ and $f_i=\sum_{\alpha} a_{i,\alpha} e_\alpha$, $1\leq i\leq n$. Let $k:=\dim \cH_d$ and put $A:=(a_{i,\alpha})\in\HC^{n\times k}$. Then $\lVert f\rVert = \lVert A\rVert_F$, where $\lVert \, \rVert_F$ is the Frobenius norm.
\end{enumerate}
\end{rem}

For the sake of clarity, we recall the definition of eigenpairs given in the introduction.
\begin{dfn}
An \emph{eigenpair} of $f\in (\cH_d)^n$ is a pair
$(v,\lambda)\in(\HC^n\backslash\{0\}) \times \HC$ such that 
$f(v) = \lambda v$. We call $v$ an \emph{eigenvector} and $\lambda$ an \emph{eigenvalue} of $f$.
Further, we call eigenpairs $(v,\lambda)$ and  $(w,\eta)$ \emph{equivalent}, 
$(v,\lambda) \sim (w,\eta)$, if there exists a nonzero $t\in\HC$ such that 
$(tv,t^{d-1}\lambda) = (w,\eta)$.
\end{dfn}

We already noted that the number of equivalence classes of a generic system $f$ equals
$D(n,d) = (d^n-1)/(d-1)$ if $d>1$, cf.~\cite{sturmfels-cartwright}. 

\subsection{The solution manifold}

Let $\cA:=\HC[X_1,\ldots,X_n,\Lambda]$ be the space of polynomials in the $n+1$ variables $X_1,\ldots,X_n,\Lambda$. 
We consider the map $F: (\cH_d)^n \to \cA^n,f\mapsto f(X)-\Lambda X$.  
For~$f\in (\cH_d)^n$ we set $F_f:=F(f)$, such that 
\begin{equation}\label{dfn_F_A}
F_f:\HC^n\times \HC \to \HC^n,\quad (v,\lambda) \mapsto f(v)-\lambda v.
\end{equation}
Observe that $F_f(X,\Lambda)$ consists of two parts, one homogeneous of degree $d$ and one homogeneous of degree $2$. 
Let us denote by $\partial_X$ and $\partial_\Lambda$ the partial derivatives of $F_f(X,\Lambda)$ with respect to $X=(X_1,\ldots,X_n)$ and $\Lambda$, respectively. 
Then the derivative of $F_f$ at $(v,\lambda)$ has the following matrix representation:
\begin{equation}
	\begin{bmatrix} \partial_X f -\partial_X (\Lambda X), &  -\partial_\Lambda(\Lambda X)\end{bmatrix}_{(X,\Lambda)=(v,\lambda)}
	= \begin{bmatrix} \partial_X f(v) -\lambda I_{n}, &  -v\end{bmatrix}\label{jacobian},
\end{equation} 
where $I_{n}$ denotes the $n\times n$-identity matrix. 

We adapt the terms ``solution manifold'' and ``well-posed''  
from \cite[sec.~16.2]{condition} and tailor them to our (structured) set $\cset{F_f}{f\in(\cH_d)^n}$. 
Compare \cite[Open Problem 15]{condition}. We call 
\begin{equation*}\label{sol_variety_sphere}
	\cV :=\cset{(f,v,\lambda)\in(\cH_d)^n\times \HS(\HC^n)\times\HC}{F_f(v,\lambda)=0},
\end{equation*}
the \emph{solution manifold} and its subset 
\begin{equation*}
        \cW :=\cset{(f,v,\lambda)\in \cV}{\rank DF_f(v,\lambda)=n}
\end{equation*}
the \emph{manifold of well-posed} triples.

The group $\cU(n)$ of unitary linear transformations $\HC^{n}\to\HC^{n}$ acts on 
$\left(\cH_d\right)^n$ and $\cV$, respectively, via  
\begin{equation}\label{groupaction}
  U.f:=U\circ f\circ U^{-1}\quad\text{and}\quad U.(f,v,\lambda) := (U.f, Uv, \lambda) .
\end{equation}
We note that 
$\cW$ is invariant under the group action and that $\cU(n)$ acts by isometries;  
see \cite[Theorem 16.3]{condition}.

\begin{lemma}\label{tangent_space_sphere} 
The solution manifold $\cV$ is a connected and smooth submanifold  of $(\cH_d)^n\times \HS(\HC^n)\times \HC$ of 
dimension $\dim_\HR \cV=\dim_\HR (\cH_d)^n+1$.
Moreover, the tangent space of $\cV$ at $(f,v,\lambda)$ equals \[\cset{(\dot{f},\dot{v},\dot{\lambda})\in (\cH_d)^n\times T_v\HS(\HC^n)\times \HC}{ \dot{f}(v) + DF_f(v,\lambda)(\dot{v},\dot{\lambda})=0}.\]
\end{lemma}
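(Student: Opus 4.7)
The plan is to realize $\cV$ as the zero fiber of a single smooth submersion and then apply the preimage theorem. Define
\[
G\colon (\cH_d)^n\times \HS(\HC^n)\times \HC \to \HC^n,\quad (f,v,\lambda)\mapsto F_f(v,\lambda) = f(v)-\lambda v,
\]
so that $\cV = G^{-1}(0)$. The crucial step is to verify that $DG(f,v,\lambda)$ is surjective at every point of $\cV$. I would establish this by varying $f$ alone: the partial map $\dot f\mapsto \dot f(v)$ is already surjective onto $\HC^n$ because $v\neq 0$. Indeed, given $w\in\HC^n$, pick an index $j$ with $v_j\neq 0$ and set $\dot f_i(X):=w_i(X_j/v_j)^d$, so that $\dot f(v)=w$. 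Hence $G$ is a submersion, and the preimage theorem shows that $\cV$ is a smooth embedded submanifold of real codimension $\dim_\HR\HC^n=2n$. Since the ambient manifold has real dimension $\dim_\HR(\cH_d)^n+(2n-1)+2$, subtracting $2n$ gives $\dim_\HR\cV=\dim_\HR(\cH_d)^n+1$.

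The tangent space then falls out of the same theorem, which identifies $T_{(f,v,\lambda)}\cV$ with $\ker DG(f,v,\lambda)$. A direct expansion using \Cref{jacobian} yields
\[
DG(f,v,\lambda)(\dot f,\dot v,\dot\lambda) \;=\; \dot f(v) \;+\; DF_f(v,\lambda)(\dot v,\dot\lambda),
\]
which is precisely the linear equation appearing in the statement; note that the constraint $\dot v\in T_v\HS(\HC^n)$ is built in automatically, since that factor already sits in the ambient manifold.

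For connectedness, I would exploit the projection $\pi\colon\cV\to \HS(\HC^n)\times \HC$, $(f,v,\lambda)\mapsto (v,\lambda)$. The base is connected, as $\HS(\HC^n)$ is a real sphere of dimension $2n-1\geq 1$ and $\HC$ is connected. The fiber over $(v,\lambda)$ is the affine subspace $\cset{f\in(\cH_d)^n}{f(v)=\lambda v}$, which is non-empty and therefore connected: the formula $f^{(v,\lambda)}_i(X):=\lambda v_i\langle X,v\rangle^d$ defines a continuous global section of $\pi$, since $\|v\|=1$ forces $f^{(v,\lambda)}(v)=\lambda v$. The image of this section is connected, and every point of $\cV$ is joined to it within its own (connected) fiber along an affine segment, so $\cV$ is path-connected.

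The main obstacle I anticipate is the submersion step, which is really the only nontrivial input: once the surjectivity of the partial derivative in $f$ is in hand, the preimage theorem, the explicit tangent space formula, and the continuous section of $\pi$ combine to deliver smoothness, dimension, tangent space, and connectedness in one package.
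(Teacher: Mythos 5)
Your proposal is correct and follows essentially the same route as the paper: there, too, $\cV$ is realized as the zero fiber of $G(f,v,\lambda)=F_f(v,\lambda)$, the derivative $DG$ is noted to be surjective (the paper says ``clearly,'' while you usefully verify it by varying $f$ alone), and the regular-value/preimage theorem delivers smoothness, the dimension count, and the tangent-space description as $\ker DG$. The one place you supply genuinely additional detail is connectedness, which the paper subsumes in its citation; your argument via the continuous global section $(v,\lambda)\mapsto \bigl(\lambda v_i\langle X,v\rangle^{d}\bigr)_{i}$ of $\pi_2$ together with convexity of the affine fibers is a correct, self-contained way to establish it.
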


\begin{proof}
The map $G: (\cH_d)^n\times \HS(\HC^n)\times \HC \to \HC^n, (f,v,\lambda)\mapsto F_f(v,\lambda)$ 
has $\cV$ as its fiber over $0$. 
The derivative of $G$, 
$$
	DG(f,v,\lambda):\; (\cH_d)^n\times T_v\HS(\HC^n)\times \HC \to \HC^n,\  
	(\dot{f},\dot{v},\dot{\lambda}) \mapsto \dot{f}(v) + DF_f(v,\lambda)(\dot{v},\dot{\lambda}) , 
$$
is clearly surjective. 
Therefore $0\in\HC^n$ is a regular value of $G$ and 
Theorem~A.9 in \cite{condition} implies the assertion.
\end{proof}

The following lemma is easily verfied using Euler's identity for homogeneous functions.
\begin{lemma}\label{kernel_D}
Let $(f,v,\lambda)\in \cW$. Then $\ker DF_f(v,\lambda)=\HC\,(v,(d-1)\lambda)^T$. In particular, $DF_f(v,\lambda)|_{T_v\times \HC}$ is invertible.
\end{lemma}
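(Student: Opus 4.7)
The plan is to verify directly that $(v,(d-1)\lambda)$ lies in $\ker DF_f(v,\lambda)$ by invoking Euler's identity, then invoke the rank condition defining $\cW$ to conclude that this line exhausts the kernel.

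First I would expand the derivative using the matrix representation in \Cref{jacobian}: for $(\dot v,\dot\lambda)\in \HC^n\times\HC$,
\[
DF_f(v,\lambda)(\dot v,\dot\lambda) \;=\; (\partial_X f(v) - \lambda I_n)\,\dot v \;-\; \dot\lambda\, v .
\]
Substituting $(\dot v,\dot\lambda)=(v,(d-1)\lambda)$ yields $\partial_X f(v)\cdot v - \lambda v - (d-1)\lambda v$. Now by Euler's identity, since each component of $f$ is homogeneous of degree $d$, we have $\partial_X f(v)\cdot v = d\, f(v) = d\lambda v$, using the eigenpair equation $f(v)=\lambda v$. Hence the expression collapses to $d\lambda v - \lambda v - (d-1)\lambda v = 0$, showing $(v,(d-1)\lambda)^T \in \ker DF_f(v,\lambda)$.

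Next I would use the defining condition of $\cW$. The source of $DF_f(v,\lambda)$ is $\HC^n\times\HC$ of complex dimension $n+1$, and by definition of $\cW$ the rank is exactly $n$, so the kernel has complex dimension one. Together with the inclusion $\HC\,(v,(d-1)\lambda)^T\subseteq \ker DF_f(v,\lambda)$ and $v\neq 0$, equality follows.

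For the final assertion, I would restrict to $T_v\times\HC$, which has complex dimension $n$ (the same as the target). It suffices to show the restricted map has trivial kernel. Any element of $\ker DF_f(v,\lambda)$ lying in $T_v\times\HC$ has the form $t\,(v,(d-1)\lambda)^T$ with $tv\in T_v$; but $\langle tv,v\rangle = t\lVert v\rVert^2 = t$, so $tv\in T_v$ forces $t=0$. Thus the restriction is an injective linear map between spaces of equal (finite) dimension, hence invertible. The only subtlety worth flagging is the use of the rank condition from $\cW$ to pin down the kernel dimension; everything else is a one-line computation with Euler's identity.
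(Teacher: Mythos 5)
Your proof is correct and follows exactly the route the paper intends: the paper leaves the lemma as an exercise (``easily verified using Euler's identity''), and your argument supplies precisely that verification---Euler's identity plus $f(v)=\lambda v$ gives the inclusion, the rank-$n$ condition defining $\cW$ gives the dimension count, and the normalization $\lVert v\rVert=1$ shows the line $\HC\,(v,(d-1)\lambda)^T$ meets $T_v\times\HC$ only in $0$, so the restriction is invertible. No gaps.
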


\begin{cor}\label{cor_tangent_space}
The tangent space $T_{(f,v,\lambda)}\cV$ at $(f,v,\lambda)\in \cW$ is given by 
$$
\Big\{(\dot{f},\dot{v}+riv,\dot{\lambda})\in (\cH_d)^n\times (T_v \oplus \HR i v)\times \HC \mid (\dot{v},\dot{\lambda}) 
 = - DF_f(v,\lambda)|_{T_v\times \HC}^{-1}\, \dot{f}(v) \Big\}.
$$
\end{cor}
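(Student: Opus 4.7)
The plan is to combine the three ingredients already established: the tangent space formula from \Cref{tangent_space_sphere}, the orthogonal decomposition $T_v\HS(\HC^n)=T_v\oplus\HR iv$ from \Cref{tangentspace_sphere}, and the invertibility of the restriction $DF_f(v,\lambda)|_{T_v\times\HC}$ from \Cref{kernel_D}. The statement is essentially a matter of choosing good coordinates on the tangent space.

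First, I would start from the characterization
\[
T_{(f,v,\lambda)}\cV=\bigl\{(\dot f,\dot u,\dot\mu)\in(\cH_d)^n\times T_v\HS(\HC^n)\times\HC\mid \dot f(v)+DF_f(v,\lambda)(\dot u,\dot\mu)=0\bigr\}
\]
supplied by \Cref{tangent_space_sphere}. Using \Cref{tangentspace_sphere}, any $\dot u\in T_v\HS(\HC^n)$ decomposes uniquely as $\dot u=\dot v+riv$ with $\dot v\in T_v$ and $r\in\HR$, and this decomposition is orthogonal. Substituting into the constraint and using linearity of $DF_f(v,\lambda)$ yields
\[
\dot f(v)+DF_f(v,\lambda)(\dot v,\dot\mu)+r\,DF_f(v,\lambda)(iv,0)=0.
\]

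Next, I would use \Cref{kernel_D}: since $(v,(d-1)\lambda)\in\ker DF_f(v,\lambda)$, so is $(iv,i(d-1)\lambda)$. Combined with the matrix form \Cref{jacobian}, this identifies the contribution of the $riv$ term with a shift in the $\dot\mu$ variable. Hence the condition can be rewritten as
\[
\dot f(v)+DF_f(v,\lambda)\bigl(\dot v,\dot\lambda\bigr)=0
\]
for an appropriate $\dot\lambda\in\HC$. By \Cref{kernel_D}, the restriction $DF_f(v,\lambda)|_{T_v\times\HC}$ is an isomorphism onto $\HC^n$, so this equation has the unique solution
\[
(\dot v,\dot\lambda)=-DF_f(v,\lambda)|_{T_v\times\HC}^{-1}\dot f(v).
\]
Conversely, starting from any $\dot f$, any $r\in\HR$, and the $(\dot v,\dot\lambda)$ given by this formula, reversing the computation exhibits $(\dot f,\dot v+riv,\dot\lambda)$ as an element of $T_{(f,v,\lambda)}\cV$.

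Finally, a dimension check confirms the parametrization is consistent: \Cref{tangent_space_sphere} gives $\dim_\HR T_{(f,v,\lambda)}\cV=\dim_\HR(\cH_d)^n+1$, and the right-hand side is parametrized by $\dot f\in(\cH_d)^n$ and a single real parameter $r$, with $(\dot v,\dot\lambda)$ determined. The only step that requires care is the bookkeeping between the kernel direction $(iv,i(d-1)\lambda)$ and the base coordinates on $T_v\times\HC$; once \Cref{kernel_D} is applied, everything else is a routine linear-algebra verification.
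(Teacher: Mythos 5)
Your route is the same as the paper's: start from the description of $T_{(f,v,\lambda)}\cV$ in \Cref{tangent_space_sphere}, split the middle component as $\dot v+riv$ via \Cref{tangentspace_sphere}, and use the invertibility of $DF_f(v,\lambda)|_{T_v\times\HC}$ from \Cref{kernel_D}. Up to the substitution step you are in fact more careful than the paper: the kernel direction is $(iv,i(d-1)\lambda)$, so by \Cref{jacobian} and Euler's identity $DF_f(v,\lambda)(iv,0)=i(d-1)\lambda v=-DF_f(v,\lambda)(0,i(d-1)\lambda)$, and the $riv$-term is absorbed exactly by replacing $\dot\mu$ with $\dot\lambda:=\dot\mu-ri(d-1)\lambda$.

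However, the ``bookkeeping'' you defer as routine is precisely where the content lies, and as written your converse step does not go through. After your substitution the $\HC$-coordinate of the tangent vector is $\dot\mu=\dot\lambda+ri(d-1)\lambda$, not $\dot\lambda$; what your computation actually parametrizes is the set of triples $(\dot f,\,\dot v+riv,\,\dot\lambda+ri(d-1)\lambda)$ with $(\dot v,\dot\lambda)=-DF_f(v,\lambda)|_{T_v\times\HC}^{-1}\dot f(v)$, i.e.\ one must add the full kernel vector $r(iv,i(d-1)\lambda)$, not only $riv$. The claim that $(\dot f,\dot v+riv,\dot\lambda)$ itself lies in $T_{(f,v,\lambda)}\cV$ fails whenever $d>1$, $\lambda\neq0$, $r\neq0$: taking $\dot f=0$ it would place $(0,riv,0)$ in the tangent space, although $DF_f(v,\lambda)(riv,0)=ri(d-1)\lambda v\neq0$. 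The dimension count cannot rescue this, since both sets have the correct dimension $\dim_\HR(\cH_d)^n+1$ but differ as subspaces. You should know that the paper's own proof contains the same slip --- it asserts $DF_f(v,\lambda)(\dot w,\dot\lambda)=DF_f(v,\lambda)(\dot v,\dot\lambda)$, which amounts to treating $(iv,0)$ as a kernel vector --- so the corollary as printed is accurate only up to this relabelling of the $\HC$-coordinate, the two descriptions coinciding exactly when $(d-1)\lambda=0$. To complete your argument, either state and prove the corrected parametrization, or record explicitly the shift by $r(0,i(d-1)\lambda)$ when you reverse the computation, and then check how the statement is used later (e.g.\ in \Cref{quotient_NJ}) with the corrected form.
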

\begin{proof}
Let $(f,v,\lambda)\in\cV$ be fixed. By \Cref{tangent_space_sphere} the tangent space of $\cV$ at $(f,v,\lambda)$ equals 
	\begin{equation*}
\cset{(\dot{f},\dot{w},\dot{\lambda})\in (\cH_d)^n\times T_v\HS(\HC^n)\times \HC}{ DF_f(v,\lambda)\, (\dot{w},\dot{\lambda})= -\dot{f}(v)}.
\end{equation*}
From \Cref{tangentspace_sphere} we know that $T_v \HS(\HC^n) = T_v \oplus \HR i v$. \Cref{kernel_D} tells us that 
	\[\ker DF_f(v,\lambda) = \HC (v,(d-1)\lambda) = \HR (v,(d-1)\lambda) \oplus \HR i (v,(d-1)\lambda).\]
Hence, $(T_v \HS(\HC^n) \times \HC) \cap \ker DF_f(v,\lambda) = \HR i (v,(d-1)\lambda)$. 
From \Cref{kernel_D} we know that $DF_f(v,\lambda)|_{T_v\times\HC}$ is invertible. 
We conclude that if $(\dot{f},\dot{w},\dot{\lambda})\in T_{(f,v,\lambda)}\cV$, then there exist uniquely determined 
$\dot{v}\in T_v$ and $r\in \HR$ such that $\dot{w}=\dot{v}+irv$ and $DF_f(v,\lambda)(\dot{w},\dot{\lambda}) = DF_f(v,\lambda)(\dot{v},\dot{\lambda})$, from which the claim follows.
\end{proof}

\subsection{Projections and normal jacobians}

We consider the projections 
\begin{equation}\label{projections1}
\pi_1\colon\cV\to (\cH_d)^n, (f,v,\lambda)\mapsto f,\quad 
\pi_2\colon\cV\to \HS(\HC^n)\times \HC, (f,v,\lambda)\mapsto (v,\lambda) .
\end{equation}

It is essential that the quotient of the normal jacobians of $\pi_1$ and $\pi_2$ can be computed in the following way.

\begin{lemma}\label{quotient_NJ}
For all $(f,v,\lambda)\in \cW$ we have
	\[\frac{NJ(\pi_1)(f,v,\lambda)}{NJ(\pi_2)(f,v,\lambda)} = \left\lvert \det (DF_f(f,v,\lambda)|_{T_v\times \HC})\right\rvert^2.\]
\end{lemma}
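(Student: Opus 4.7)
The plan is to parametrize $T := T_{(f,v,\lambda)}\cV$ via Corollary \ref{cor_tangent_space} and then apply Lemma \ref{NJ_smale_lemma}. Write $\tilde A := DF_f(v,\lambda)|_{T_v \times \HC}$, which is invertible by Lemma \ref{kernel_D}, and let $\mathrm{ev}_v \colon (\cH_d)^n \to \HC^n$, $\dot f \mapsto \dot f(v)$. Corollary \ref{cor_tangent_space} presents $T$ as (essentially) the graph of the $\HC$-linear map
\[
\phi := -\tilde A^{-1} \circ \mathrm{ev}_v \colon (\cH_d)^n \longrightarrow T_v \times \HC ,
\]
augmented by the one-dimensional $\HR iv$-direction inside $T_v\HS(\HC^n)$ (which projects to $0$ under $\pi_1$).

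The key computation is $\phi\phi^* = (\tilde A \tilde A^*)^{-1}$. This relies on the Bombieri--Weyl reproducing kernel: for every $p \in \cH_d$ one has $p(v) = \langle p, q_v\rangle$ with $q_v := \langle X, v\rangle^d$ and $\|q_v\|^2 = \|v\|^{2d} = 1$. Hence $\mathrm{ev}_v \mathrm{ev}_v^* = I_n$ on $\HC^n$, which gives $\phi\phi^* = \tilde A^{-1}(\mathrm{ev}_v \mathrm{ev}_v^*)\tilde A^{-*} = (\tilde A \tilde A^*)^{-1}$ and therefore $|\det(\phi\phi^*)|^{-1} = |\det \tilde A|^2$.

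Lemma \ref{NJ_smale_lemma} applied to $\phi$ then yields $|\det p_1| / |\det p_2|_W| = |\det \tilde A|^2$, where $p_1, p_2$ are the projections from $\Gamma(\phi)$ to $(\cH_d)^n$ and $T_v \times \HC$, respectively. It remains to match these with the normal Jacobians on $T$: the extra $\HR iv$-direction enters $\pi_1$ and $\pi_2$ in a way that contributes the same multiplicative factor to both $NJ(\pi_1)$ and $NJ(\pi_2)$, so that this factor cancels in the ratio and leaves $NJ(\pi_1)/NJ(\pi_2) = |\det \tilde A|^2$.

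The main technical obstacle is this last identification step: one must carefully track the inner product on $T$ inherited from the ambient $(\cH_d)^n \times T_v\HS(\HC^n) \times \HC$, verifying that the contribution of the vector $(0, iv, i(d-1)\lambda)$ to each of the two normal Jacobians is identical. Once this bookkeeping is done one obtains $NJ(\pi_1) / NJ(\pi_2) = |\det(DF_f(v,\lambda)|_{T_v \times \HC})|^2$ as asserted.
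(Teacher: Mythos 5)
Your proposal is correct and follows essentially the same route as the paper: there too the tangent space is realized as the graph of $-\tilde A^{-1}$ composed with the evaluation map and Lemma \ref{NJ_smale_lemma} is applied, the only differences being that the paper first reduces to $v=e_1$ by unitary equivariance, where $\dot f\mapsto\dot f(e_1)$ is visibly an orthogonal coordinate projection (the coordinate form of your reproducing-kernel identity $\mathrm{ev}_v\mathrm{ev}_v^*=I_n$), and that it absorbs the $\HR iv$-direction into the domain of the graph map via $\Pi\times 0$, being no more explicit about that bookkeeping than you are (indeed $(0,iv,0)$ spans $\ker D\pi_1$ and is mapped by $D\pi_2$ isometrically onto $\HR\,(iv,0)$, orthogonally to everything else, so it contributes a factor $1$ on both sides, as you assert). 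One cosmetic slip: $\phi\phi^*=\tilde A^{-1}\,\mathrm{ev}_v\mathrm{ev}_v^*\,(\tilde A^{-1})^*=(\tilde A^*\tilde A)^{-1}$ rather than $(\tilde A\tilde A^*)^{-1}$, but the determinant, and hence the conclusion, is unaffected.
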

\begin{proof}
Let $\cU(n)$ be group of unitary maps $\HC^n\to \HC^n$. Recall from \Cref{groupaction} that for $U\in\cU(n)$ and $(f,v,\lambda)\in\cV$ we have put $U.(f,v,\lambda):=(U.f,Uv,\lambda)$. 
By definition, the projections $\pi_1,\pi_2$ are $\cU(n)$-equivariant. 
Hence for any $U\in\cU(n)$ we have~$NJ(\pi_i)(f,v,\lambda) = NJ(\pi_i)(U.(f,v,\lambda))$, $i=1,2$. It therefore suffices to show the claim for $v=e_1$, where  $e_1:=(1,0,\ldots,0)\in\HC^n$.

Suppose that $(f,e_1,\lambda)\in \cW$. The derivatives of $\pi_1$ and $\pi_2$ are the projections
	\[
	\begin{array}{lll}
	D\pi_1(f,e_1,\lambda):& T_{(f,e_1,\lambda)} \cV \to (\cH_d)^n,& (\dot{f},\dot{v},\dot{\lambda})\mapsto \dot{f},\\
	D\pi_2(f,e_1,\lambda):& T_{(f,e_1,\eta)} \cV \to T_{e_1}\HS(\HC^n)\times \HC,& (\dot{f},\dot{v},\dot{\lambda})\mapsto (\dot{v},\dot{\lambda}).
	\end{array}\]
Let us write $\dot{f} = \sum\limits_{\alpha} \dot{f}_\alpha X^\alpha$, where for all $\alpha$ we have $\dot{f}_\alpha\in\HC^n$. Then we obtain $\dot{f}(e_1) = \dot{f}_{(d,0,\ldots,0)}$. Hence, $\dot{f}\mapsto \dot{f}(e_1)$ is an orthogonal projection. We will denote this projection by $\Pi$. By \Cref{tangentspace_sphere} the projection $\dot{v}+riv\mapsto \dot{v}$ is orthogonal as well. Using  \Cref{cor_tangent_space} it follows that 
$T_{(f,e_1,\lambda)}\cV$ is the graph of the surjective linear function 	\[-DF_f(e_1,\lambda)|_{T_{e_1}\times \HC}^{-1} \;\circ\; (\Pi\times 0):\: (\cH_d)^n\times \HR iv \to T_{e_1}\times \HC.\]
Applying \Cref{NJ_smale_lemma} yields the claim.
\end{proof}

\subsection{The eigendiscriminant variety}

We define the set of \emph{ill-posed} triples $(f,v,\lambda)$ to be
\begin{equation}\label{dfn_sigma}
	\Sigma' :=\cset{(f,v,\lambda)\in \cV}{\rank DF_f(v,\lambda)< n} = \cV\backslash\cW.
\end{equation}
Moreover, we denote by $\Sigma$ the Zariski closure of $\overline{\pi_1(\Sigma')}$. 
In reference to \cite{sturmfels-seigal}, we call $\Sigma$ the \emph{eigendiscriminant variety}.

\begin{rem}
We have $(f,v,\lambda)\in \Sigma'$ if and only if $(v,\lambda)$ is not an isolated root of the polynomial $F_f$. 
Thus, $f\in\pi_1(\Sigma')$ if and only if $F_f$ has a double root or $f$ has infinitely many roots.
\end{rem}


\begin{prop}\label{sigma_variety} 
\begin{enumerate}
\item \label{sigma_variety1} We have $f\not\in\Sigma$, if and only if the number of equivalence classes of $f$ equals $D(n,d)$.
\item \label{sigma_variety2}The set $\Sigma$ is a closed hypersurface of $(\cH_d)^n$ of degree at most~$n(n-1)d^{n-1}$.
\end{enumerate}
\end{prop}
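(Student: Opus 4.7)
The plan is, for part \ref{sigma_variety1}, to combine the Cartwright--Sturmfels count with \Cref{kernel_D}. By \Cref{kernel_D}, a triple $(f,v,\lambda)\in\cV$ lies in $\cW$ iff $DF_f(v,\lambda)|_{T_v\times\HC}$ is invertible, equivalently iff $(v,\lambda)$ is a transverse (simple) root of $F_f=0$ modulo the $S^1$-action. So $f\notin\pi_1(\Sigma')$ iff every eigenpair of $f$ is simple, in which case the number of equivalence classes is locally constant at $f$ and therefore equals the generic value $D(n,d)$ from \cite[Theorem 1]{sturmfels-cartwright}. Conversely, if $f$ has exactly $D(n,d)$ classes, each must be simple (a multiplicity $>1$ would strictly reduce the count by a Bezout-type argument in weighted projective space), hence $f\notin\pi_1(\Sigma')$; since $\Sigma\setminus\pi_1(\Sigma')$ collects only limits of such coalescences, such an $f$ even lies outside the closure $\Sigma$.

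For part \ref{sigma_variety2}, I would proceed in three stages. \emph{Algebraicity:} by \Cref{kernel_D}, $\Sigma'$ is cut out in $\cV$ by the single polynomial equation $\det(DF_f(v,\lambda)|_{T_v\times\HC})=0$, well-defined locally via any smooth orthonormal frame of the bundle $v\mapsto T_v$, so $\Sigma'$ is an algebraic subvariety of codimension one in $\cV$. \emph{Hypersurface:} the projection $\pi_1\colon\cV\to(\cH_d)^n$ is algebraic, with generic fiber a disjoint union of $D(n,d)$ circles; the image of the codimension-one set $\Sigma'$ is again of codimension one, so $\Sigma=\overline{\pi_1(\Sigma')}$ is a hypersurface. \emph{Degree:} I bound $\deg\Sigma$ by intersecting with a generic affine line $L=\{f_0+tf_1:t\in\HC\}\subset(\cH_d)^n$ and counting triples $(v,\lambda,t)$ satisfying
\[
(f_0+tf_1)(v)=\lambda v \quad\text{and}\quad \det\bigl(DF_{f_0+tf_1}(v,\lambda)|_{T_v\times\HC}\bigr)=0.
\]
In an affine chart of the weighted projective space parametrising $(v,\lambda)$, the first system has $n$ equations of degree $d$ in $v$, linear in $(t,\lambda)$, while the determinantal equation has degree $(n-1)(d-1)+1$ in $v$ and low degree in $(t,\lambda)$. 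A multihomogeneous Bezout count, divided by the generic fiber size $D(n,d)$ of $\pi_1$, should yield the bound $n(n-1)d^{n-1}$.

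The hard part is making the degree bound in the third stage sharp: a crude Bezout on the above system overshoots. One has to account both for spurious solutions at infinity in the weighted projective compactification and for the generic fiber cardinality of $\pi_1|_{\Sigma'}$ over a generic point of $\Sigma$. Identifying and subtracting these two contributions is what should produce the exact bound $n(n-1)d^{n-1}$.
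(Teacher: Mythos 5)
Your route is genuinely different from the paper's, which settles both parts almost entirely by citation: part (1) is \cite[Theorem 1.2]{sturmfels-cartwright}, and for part (2) the paper invokes the theorem of \cite{sturmfels-seigal} that the eigendiscriminant of a \emph{general} (non-symmetric) tensor is an irreducible hypersurface whose degree is computed there, restricts its defining polynomial to the linear subspace of symmetric tensors, and uses \Cref{robeva_example} to see that this restriction is not identically zero; the degree can only drop under a linear slice, whence the bound $n(n-1)d^{n-1}$. Measured against that, your plan has two genuine gaps, both in part (2). First, the degree bound is not actually proved: you set up a multihomogeneous B\'ezout count and then concede that a crude count overshoots and that subtracting the contributions at infinity and the generic fiber cardinality ``should'' yield $n(n-1)d^{n-1}$. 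That subtraction \emph{is} the theorem; without carrying it out you obtain some bound, not the stated one. Second, the assertion that ``the image of the codimension-one set $\Sigma'$ is again of codimension one'' is not automatic: $\pi_1|_{\Sigma'}$ could a priori have positive-dimensional fibers (beyond the circle direction) over all of its image, in which case $\Sigma$ would have codimension at least two and would fail to be a hypersurface. Ruling this out is precisely what the irreducibility result cited by the paper supplies; in your approach you would need, e.g., to exhibit one $f$ over which $\Sigma'$ is finite modulo the circle action.

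Part (1) is essentially sound, with one soft spot. Your use of \Cref{kernel_D} to identify $\cW$ with the simple eigenpairs, and the multiplicity argument showing that the count equals $D(n,d)$ exactly when all classes are simple, reproduce the content of \cite[Theorem 1.2]{sturmfels-cartwright} and are fine. But the proposition concerns the Zariski closure $\Sigma$, not $\pi_1(\Sigma')$, and your sentence that $\Sigma\setminus\pi_1(\Sigma')$ ``collects only limits of such coalescences'' is circular. What is needed is that $\pi_1(\Sigma')$ is already closed; this does follow, e.g., from properness of $\pi_1|_{\cV}$ (the fibers are bounded because $\|v\|=1$ forces $|\lambda|\le\max_{\|v\|=1}\|f(v)\|$) combined with constructibility of the image, but it should be said.
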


\begin{proof}
For \Cref{sigma_variety1} use \cite[Theorem 1.2]{sturmfels-cartwright}. In \cite[Theorem 4.1, Corollary 4.2]{sturmfels-seigal} 
it is shown that the eigendiscriminant variety for tensors in $(\HC^n)^{\otimes d}$ is an irreducible hypersurface. 
We obtain $\Sigma$ by intersecting this with the linear subspace of symmetric tensors
and requiring $\|v\|=1$.
The assertion \Cref{sigma_variety2} follows from the dimension theorem, Bezout's theorem and the fact that 
$\Sigma$ is properly contained in $(\cH_d)^n$ (see \Cref{robeva_example} below).
\end{proof}
In \cite{robeva} the following explicit element in $(\cH_d)^n\backslash \Sigma$ is described ($d>1$). 
\begin{prop}\label{robeva_example}
Let $\phi(X) := \left(X_1^d,X_2^d,\ldots, X_n^d\right)\in(\cH_d)^n$. Then $\phi\not\in \Sigma$.
\end{prop}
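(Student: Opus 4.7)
The plan is to use \Cref{sigma_variety}\Cref{sigma_variety1}, which characterizes $\phi\notin\Sigma$ by the condition that the number of equivalence classes of eigenpairs of $\phi$ equals $D(n,d)=(d^n-1)/(d-1)$. So I will solve the eigenvalue system for $\phi$ explicitly and count equivalence classes.

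First, I would write out the eigenpair condition $\phi(v)=\lambda v$ coordinatewise: $v_i^d=\lambda v_i$ for $i=1,\ldots,n$, equivalently $v_i(v_i^{d-1}-\lambda)=0$. Observe that $\lambda=0$ would force $v_i^d=0$, hence $v=0$, contradicting $v\neq 0$. So $\lambda\in\HC\setminus\{0\}$, and for each $i$ we have either $v_i=0$ or $v_i^{d-1}=\lambda$. Thus eigenpairs are parameterized by a nonempty support set $S\subseteq\{1,\ldots,n\}$, a nonzero $\lambda\in\HC^*$, and a choice of $(d-1)$-th root of $\lambda$ for each $i\in S$.

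Next, I would exploit the equivalence relation to normalize. The action $(v,\lambda)\mapsto (tv,t^{d-1}\lambda)$ with $t\in\HC^*$ allows one to fix $\lambda=1$, and the residual stabilizer is precisely $\{t\in\HC^*\mid t^{d-1}=1\}$, a group of order $d-1$. Under the normalization $\lambda=1$, each coordinate $v_i$ lies in $\{0\}\cup\mu_{d-1}$, where $\mu_{d-1}$ denotes the $(d-1)$-th roots of unity, giving exactly $d$ choices per coordinate. Excluding $v=0$ leaves $d^n-1$ normalized eigenpairs. The residual $\mu_{d-1}$-action on these is free because $v\neq 0$ (multiplying a nonzero coordinate by a nontrivial root of unity changes it), so the number of equivalence classes equals $(d^n-1)/(d-1)=D(n,d)$, as desired.

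The main subtlety I would watch out for is the freeness of the residual $\mu_{d-1}$-action, since in principle some $v$ could be fixed by a nontrivial $(d-1)$-th root of unity. However, $v_i\in\mu_{d-1}\cup\{0\}$ and any fixed coordinate $v_i\in\mu_{d-1}$ forces $t=1$, so the action is free on $\{v\neq 0\}$ as claimed. An alternative route, which I would mention as a sanity check but not pursue as the main argument, is to verify directly that every eigenpair $(v,\lambda)$ of $\phi$ is well-posed: the derivative \Cref{jacobian} becomes $[\mathrm{diag}(dv_i^{d-1}-\lambda),-v]$, whose diagonal entries equal $(d-1)\lambda\neq 0$ on $S$ and $-\lambda\neq 0$ off $S$, giving full rank $n$. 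This shows $\phi\notin\pi_1(\Sigma')$, but since we need $\phi\notin\overline{\pi_1(\Sigma')}=\Sigma$, the counting argument via \Cref{sigma_variety}\Cref{sigma_variety1} is the cleaner route.
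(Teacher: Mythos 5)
Your proposal is correct and follows essentially the same route as the paper: both normalize each class to a representative with $\lambda=1$ (after ruling out $\lambda=0$), identify the eigenvectors as the nonzero tuples whose entries lie in $\{0\}$ together with the $(d-1)$-th roots of unity, and divide the resulting $d^n-1$ solutions by the residual action of the $(d-1)$-th roots of unity, which has no nontrivial fixed points. The only cosmetic difference is that the paper performs this last count via Burnside's lemma with empty fixed-point sets, whereas you argue freeness of the action directly; the conclusion $\phi\notin\Sigma$ then follows in both cases from \Cref{sigma_variety}\Cref{sigma_variety1}.
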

\begin{proof}
One has
	\[F_\phi(X,\Lambda) = \begin{pmatrix}X_1^{d}-\Lambda  X_1\\ \vdots \\ X_n^{d}-\Lambda  X_n\end{pmatrix}.\]
We are going to show that $\phi$ has exactly $D(n,d)$ many classes of eigenpairs. 
Clearly, for any $v\in\HC\backslash\set{0}$ we have $F_\phi(v,0)\neq 0$. 
Hence, any equivalence class of eigenpairs of $\phi$ contains some representative of the form $(v,1)$. 
Let $\zeta$ be a primitive $(d-1)$-th root of unity and define 
\begin{equation*}
	M:=\Big\{(\epsilon_{1}\zeta^{i_1}, \ldots,\epsilon_{n}\zeta^{i_n}) \mid \epsilon\in\set{0,1}^n\backslash\set{0}, \forall j: 1\leq i_j\leq d-1 \Big\}
\end{equation*}
Observe that $F_\phi(z,1) = (z_i^{d}- z_i)_{i=1}^n = 0$, if and only if $z\in M\cup\set{0}$. For all $z\in M$ and $t\in\HC$ we have $(z,1)\sim(t z,1)$ 
if and only if $t=\zeta^i$ for some $1\leq i\leq d-1$. Let $\kU:=\langle \zeta\rangle$ denote the cyclic group generated by $\zeta$. 
We define a group action of $\kU$ on $M$ via componentwise multiplication. The number of equivalence classes of eigenpairs of $\phi$ 
then equals the number of $\kU \text{-orbits}$ in $M$. For $u\in\kU$ put $M^u:=\cset{z\in M}{uz=z}$. Observe that for $u\neq 1$ we have that $M^u=\emptyset$. 
Using Burnside's lemma we obtain
	\[\text{number of } \kU \text{-orbits in } M = \frac{1}{\lvert \kU\rvert} \,  \sum_{u\in\kU} \lvert M^u\rvert 
    = \frac{1}{\lvert \kU\rvert} \,  \lvert M\rvert = \frac{d^n-1}{d-1}=D(n,d).\]
\end{proof}

\subsection{The standard distribution on the solution manifold}

The definition of standard distribution is adapted from \cite[eq.~(17.19)]{condition}. Following \Cref{varphi_E},  
we say that a random variable $f$ on $(\cH_d)^n$ is standard normal distributed, if $f$ has the density 
\[
   \varphi(f):= \varphi_{(\cH_d)^n}(f)= \frac{1}{\pi^k}\, \exp\left(-\lVert f\rVert^2\right), \quad\text{ where } k=\dim_\HC (\cH_d)^n.
\]
By construction $\varphi(f)$ is invariant under the action of $\cU(n)$. 

The following procedure: 

\begin{enumerate}
\item choose $f$ according to the standard normal distribution.
\item choose some normalized eigenpair $(v,\lambda)$ of $f$ uniformly at random.
\end{enumerate} 
yields a probability distribution on $\cV$, which we call the \emph{standard distribution} 
and denote it by $(f,v,\lambda)\sim {\mathrm{STD}_\cV}$. 
Clearly, the standard distribution is invariant under the action of $\cU(n)$ on $\cV$. 

Observe that the two steps above are precisely the steps \Cref{main_distribution1}--\Cref{main_distribution2} in the operative description of $\rho^{n,d}$ 
given in the introduction.
This implies that $\rho^{n,d}$ equals the density of the pushforward measure of $\mathrm{STD}_\cV$ with respect to 
the projection
$\pi_3\colon\cV\to \HC,\, (f,v,\lambda)\mapsto \lambda$. 

According to \Cref{sigma_variety},  the fiber
\[
 V(f):=\cset{(v,\lambda)\in \HS(\HC^n)\times \HC}{(f,v,\lambda)\in \cV} = \pi_2(\pi_1^{-1}(f))
\]
over $f\not\in \Sigma$ consists of $D=D(n,d)$ disjoint circles, each of them having volume $2\pi$. Hence the density of the uniform distribution on $V(f)$ equals $(2\pi D)^{-1}$. As in \cite[Lemma~17.18]{condition}, one can now show that the density of the standard distribution is given by
\begin{equation}\label{dfn_standard_distr}
\rho_{\mathrm{STD}_\cV} (f,v,\lambda) = \frac{1}{2\pi D(n,d)}\, NJ(\pi_1)(f,v,\lambda) \, \varphi(f),
\end{equation}
where $\pi_1:\cV\to (\cH_d)^n$ is the projection from \eqref{projections1}. 

We denote by 
\[
 V(v,\lambda):=\cset{f\in (\cH_d)^n}{(f,v,\lambda)\in \cV} = \pi_1(\pi_2^{-1}(v,\lambda)) 
\]
the fiber of $\pi_2$ over $(v,\lambda)\in \HS(\HC^n)\times \HC$.

\begin{lemma}\label{useful_lemma}
Let $\theta: \cV\to \HR$ be an integrable map that is invariant under the group action from \Cref{groupaction} and $e_1:=(1,0,\ldots,0)\in\HS(\HC^n)$. Then
	\begin{equation*}
	\int_{(f,v,\lambda)\in \cV} \theta(f,v,\lambda) \, \rho_{\mathrm{STD}_\cV}(f,v,\lambda) \d \cV =
	\frac{\pi^{n-1}}{\Gamma(n) D(n,d)}\int\limits_{\lambda\in\HC}E(\lambda)\, \d \HC.
	\end{equation*}
where 
	\[E(\lambda)= \int\limits_{f\in V(e_1,\lambda)}  \left\lvert \det DF_f(e_1,\lambda) \right\rvert^2\, \theta(f,e_1,\lambda)\,  \varphi_{V(e_1,\lambda)}(f) \;\d V(e_1,\lambda).\]
\end{lemma}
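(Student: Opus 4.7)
The plan is to apply the coarea formula to the second projection $\pi_2\colon\cV\to \HS(\HC^n)\times\HC$ and then exploit the $\cU(n)$-invariance of every object in sight. Since the ill-posed locus $\Sigma'$ sits over the hypersurface $\Sigma$ (\Cref{sigma_variety}), it has measure zero in $\cV$, so I may replace $\cV$ by $\cW$ throughout without changing any integral.

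First, I would invoke the coarea formula (\Cref{coarea_formula}) for $\pi_2$, write $\rho_{\mathrm{STD}_\cV}$ using \eqref{dfn_standard_distr}, and use \Cref{quotient_NJ} to turn the ratio of normal jacobians into the determinant factor. The outcome is
\begin{equation*}
\int_{\cV}\theta\,\rho_{\mathrm{STD}_\cV}\d\cV \;=\; \frac{1}{2\pi D(n,d)}\int_{\HS(\HC^n)\times\HC}\!\!\!I(v,\lambda)\d\bigl(\HS(\HC^n)\times\HC\bigr),
\end{equation*}
where
\begin{equation*}
I(v,\lambda) := \int_{V(v,\lambda)} \theta(f,v,\lambda)\,\bigl\lvert\det DF_f(v,\lambda)|_{T_v\times\HC}\bigr\rvert^{2}\,\varphi(f)\d V(v,\lambda).
\end{equation*}

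Next, I would show that $I(v,\lambda)$ is independent of $v\in\HS(\HC^n)$ by the action \eqref{groupaction}. For $U\in\cU(n)$ the map $f\mapsto U.f$ is an isometry of $(\cH_d)^n$ (Remark~\ref{f_norm_is_frobenius}) that sends the fiber $V(v,\lambda)$ onto $V(Uv,\lambda)$. The Gaussian $\varphi$ is $\cU(n)$-invariant by definition, $\theta$ is invariant by hypothesis, and from the relation $F_{U.f}(Uv,\lambda)=U\,F_f(v,\lambda)$ one obtains $DF_{U.f}(Uv,\lambda) = U\circ DF_f(v,\lambda)\circ (U^{-1}\oplus \id_{\HC})$, which together with $U(T_v)=T_{Uv}$ shows $\bigl\lvert\det DF_{U.f}(Uv,\lambda)|_{T_{Uv}\times\HC}\bigr\rvert = \bigl\lvert\det DF_f(v,\lambda)|_{T_v\times\HC}\bigr\rvert$. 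Hence $I(Uv,\lambda)=I(v,\lambda)$, and by transitivity of $\cU(n)$ on $\HS(\HC^n)$ we get $I(v,\lambda)=I(e_1,\lambda)=E(\lambda)$.

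Finally, I would integrate $v$ out using $\operatorname{vol}\bigl(\HS(\HC^n)\bigr)=\operatorname{vol}(\HS^{2n-1})=\frac{2\pi^{n}}{\Gamma(n)}$, giving
\begin{equation*}
\int_{\cV}\theta\,\rho_{\mathrm{STD}_\cV}\d\cV \;=\;\frac{1}{2\pi D(n,d)}\cdot\frac{2\pi^{n}}{\Gamma(n)}\int_{\HC}E(\lambda)\d\HC \;=\;\frac{\pi^{n-1}}{\Gamma(n)\,D(n,d)}\int_{\HC}E(\lambda)\d\HC,
\end{equation*}
as claimed. The main point that needs care is the equivariance argument in the middle step: one must verify that, under the action \eqref{groupaction}, the restriction $DF_f(v,\lambda)|_{T_v\times\HC}$ transforms to $DF_{U.f}(Uv,\lambda)|_{T_{Uv}\times\HC}$ by pre- and post-composition with unitaries (using $U(T_v)=T_{Uv}$ from \Cref{tangentspace_sphere}), so that the absolute value of the determinant is preserved. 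Everything else reduces to bookkeeping with the volume of the odd-dimensional sphere and the definition \eqref{dfn_standard_distr} of $\rho_{\mathrm{STD}_\cV}$.
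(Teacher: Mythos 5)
Your proposal is correct and follows essentially the same route as the paper: apply the coarea formula to $\pi_2$, substitute the definition \eqref{dfn_standard_distr} of $\rho_{\mathrm{STD}_\cV}$ together with \Cref{quotient_NJ}, use unitary invariance to replace the fiber integral over $V(v,\lambda)$ by the one over $V(e_1,\lambda)$, and integrate out $v$ using $\mathrm{vol}\,\HS(\HC^n)=\frac{2\pi^n}{\Gamma(n)}$. Your explicit verification of the equivariance of $\lvert\det DF_f(v,\lambda)|_{T_v\times\HC}\rvert$ and the remark about discarding the measure-zero set $\Sigma'$ are details the paper leaves implicit, but they do not constitute a different argument.
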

\begin{proof}
Using the coarea formula, we obtain
\begin{align*}
&\int_{(f,v,\lambda)\in \cV} \theta(f,v,\lambda) \, \rho_{\mathrm{STD}_\cV}(f,v,\lambda) \d \cV\\
= &\int_{(v,\lambda)\in \HS(\HC^n)\times \HC} \left[\int_{f\in V(v,\lambda)} \frac{\theta(f,v,\lambda)\,  \rho_{\mathrm{STD}_\cV}(f,v,\lambda)}{NJ(\pi_2)(f,v,\lambda)} \d V(v,\lambda) \right] \d (\HS(\HC^n)\times \HC)
\end{align*}
By the definition of $\rho_{\mathrm{STD}_\cV}$, \Cref{quotient_NJ}, and the unitary invariance of $\theta$ we have that
\begin{align}
\nonumber&\int_{f\in V(v,\lambda)} \frac{\theta(f,v,\lambda)}{NJ(\pi_2)(f,v,\lambda)} \,  \rho_{\mathrm{STD}_\cV}\;\d V(v,\lambda) \\
\nonumber=&\;\frac{1}{2\pi D}\int_{f\in V(v,\lambda)} \lvert \det DF_f(v,\lambda)|_{T_v\times\HC} \rvert^2 \, \theta(f,v,\lambda)\,  \varphi(f) \;\d V(v,\lambda)\\
\nonumber=&\;\frac{1}{2\pi D}\int_{f\in V(e_1,\lambda)} \lvert \det DF_f(e_1,\lambda)|_{T_{e_1}\times\HC} \rvert^2 \, \theta(f,e_1,\lambda)\,  \varphi(f) \;\d V(e_1,\lambda)\\
=&\; \frac{E(\lambda)}{2\pi D}.\label{aa1}
\end{align}
Observe that the integral \Cref{aa1} does not depend on $v$ anymore. The claim follows by using $\int 1 \d \HS(\HC^n)=\frac{2\pi^n}{\Gamma(n)}$ 
\end{proof}


\section{Proofs}\label{se:proofs}

We are now ready to prove \Cref{main_thm}. 

\begin{prop}\label{pushforward}
The pushforward density of ${\mathrm{STD}_\cV}$ with respect to $\pi_3$ is
	\begin{align*}
	\rho^{n,d}(\lambda) &= \frac{d^{n-1}\, e^{-\lvert\lambda\rvert^{2}\left(1-\frac{1}{d}\right)}}{\pi D(n,d)}\,
	  \frac{\Gamma\left(n,\frac{\lvert \lambda \rvert^{2}}{d}\right)}{\Gamma(n)} = \frac{d^{n-1}\,  e^{-\lvert\lambda\rvert^{2}}}{\pi D(n,d)}\, 
	  \sum_{k=0}^{n-1}\frac{1}{k!}\, \left(\frac{\lvert \lambda \rvert^{2}}{d}\right)^k
	 \end{align*}
\end{prop}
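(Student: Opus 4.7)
The plan is to invoke \Cref{useful_lemma} with a test function $\theta$ that depends only on $\lambda$, which reduces the problem to identifying
\[
\rho^{n,d}(\lambda) = \frac{\pi^{n-1}}{\Gamma(n) D(n,d)}\, E(\lambda),
\]
so everything boils down to computing $E(\lambda)$. By the unitary invariance already built into \Cref{useful_lemma}, I only need to integrate over $V(e_1,\lambda)=\{f\in(\cH_d)^n\mid f(e_1)=\lambda e_1\}$. Writing $g_0:=(\lambda X_1^d,0,\ldots,0)$, the set $V(e_1,\lambda)$ is the affine translate $g_0+V(e_1,0)$, and since the coefficient of $X_1^d$ in every component of $h\in V(e_1,0)$ vanishes we get the orthogonal decomposition $\|f\|^2=|\lambda|^2+\|h\|^2$. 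Hence the Gaussian density splits as $\varphi(f)=\pi^{-k}e^{-|\lambda|^2}e^{-\|h\|^2}$.

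Next I compute the determinant factor. Setting up $DF_f(e_1,\lambda)|_{T_{e_1}\times\HC}$ in the basis $\{\partial_{X_2},\ldots,\partial_{X_n},\partial_\Lambda\}$ gives an $n\times n$ matrix whose last column is $-e_1$ and whose upper-left $(n-1)\times(n-1)$ block is $A-\lambda I_{n-1}$, where $A_{ij}=\partial_{X_{j+1}}f_{i+1}(e_1)$ for $i,j\in\{1,\ldots,n-1\}$; the first row off the last column plays no role after cofactor expansion. Expanding along the last column, I obtain
\[
\bigl|\det DF_f(e_1,\lambda)|_{T_{e_1}\times\HC}\bigr|^2=\bigl|\det(A-\lambda I_{n-1})\bigr|^2.
\]
In the Bombieri--Weyl basis, the monomial $X_1^{d-1}X_j$ carries the weight $\sqrt d$, so the entries of $A$ satisfy $A_{ij}=\sqrt{d}\cdot a_{i+1,\alpha_{j+1}}$ with the $a_{i,\alpha}$ i.i.d.\ $N(0,\tfrac12)$; equivalently, $A=\sqrt{d}\,B$ with $B\sim N\bigl(0,\tfrac12 I_{n-1}\bigr)$. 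The remaining free coefficients of $h$ (the first row of the Jacobian and all higher-order monomials) do not appear in $\det(A-\lambda I_{n-1})$ and integrate out to $1$ against their Gaussian weight, leaving a factor $\pi^{k-n}$ coming from the $h$-volume normalization.

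Combining these pieces,
\[
E(\lambda)=\pi^{-n}\, e^{-|\lambda|^2}\, d^{n-1}\,\mean_{B\sim N(0,\tfrac12 I_{n-1})}\bigl|\det(B-\tfrac{\lambda}{\sqrt{d}}\,I_{n-1})\bigr|^2,
\]
and \Cref{expectation_det} (applied with $n\rightsquigarrow n-1$ and $t=-\lambda/\sqrt{d}$) evaluates the expectation as $e^{|\lambda|^2/d}\,\Gamma\bigl(n,|\lambda|^2/d\bigr)$. Substituting back gives the first form of $\rho^{n,d}(\lambda)$; the second form follows from the identity $\Gamma(n,x)=(n-1)!\,e^{-x}\sum_{k=0}^{n-1}x^k/k!$ of \Cref{auxiliary_lemma}(1), using $e^{-|\lambda|^2(1-1/d)}\cdot e^{-|\lambda|^2/d}=e^{-|\lambda|^2}$.

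The main technical hurdle is the determinant step: one must set up $DF_f(e_1,\lambda)|_{T_{e_1}\times\HC}$ carefully so that the cofactor expansion along the $-e_1$ column isolates the characteristic polynomial of the $(n-1)\times(n-1)$ Ginibre-type block $A$, and keep track of the $\sqrt{d}$ coming from the Bombieri--Weyl normalization. Once that is done correctly, the rest is bookkeeping of Gaussian volumes and an application of \Cref{expectation_det}.
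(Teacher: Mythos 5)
Your proof is correct and follows essentially the same route as the paper's: reduce via \Cref{useful_lemma} to an integral over the fiber $V(e_1,\lambda)$, split off the $X_1^d$-coefficient to get $\|f\|^2=|\lambda|^2+\|h\|^2$, identify $\bigl|\det DF_f(e_1,\lambda)|_{T_{e_1}\times\HC}\bigr|^2$ by cofactor expansion along the $-e_1$ column with the squared characteristic polynomial of a $\sqrt{d}$-scaled $(n-1)\times(n-1)$ Ginibre block (the $\sqrt d$ coming from the Bombieri--Weyl weight of $X_1^{d-1}X_j$), and finish with \Cref{expectation_det} and \Cref{auxiliary_lemma}(1). The only blemish is the phrase about a leftover factor $\pi^{k-n}$: after integrating out the Gaussian coordinates that do not enter the determinant and normalizing the $A$-expectation, the net constant is $\pi^{-n}$, which is what your displayed formula for $E(\lambda)$ correctly uses.
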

\begin{proof}
Before we start, we remark that \Cref{auxiliary_lemma} justifies the right equality. By \Cref{useful_lemma}, the pushforward distribution $\rho^{n,d}(\lambda)$ is obtained by computing
	\begin{equation}\label{pushforward_integral}
	\frac{\pi^{n-1}}{\Gamma(n) D(n,d)} \int_{f\in V(e_1,\lambda)}  \left\lvert \det DF_f(e_1,\lambda)|_{T_{e_1}\times\HC} \right\rvert^2  \varphi_{V(e_1,\lambda)}(f) \d V(e_1,\lambda)
	\end{equation}
The case $n=1$ is an easy exercise. So let us assume that $n>1$. Observe that $V(e_1,\lambda)$ is the affine space
	\[V(e_1,\lambda) = \lambda X_1^d e_1 + \Big\{g\in(\cH_d)^n \mid g(e_1)=0 \Big\}.\]
Let $R:=\Big\{h\in (\cH_d)^n \mid h(e_1)=0, Dh(e_1)=0 \Big\}$. 
By \cite[equation (16.10)]{condition}, for any $f\in V(e_1,\lambda)$, there exist uniquely determined $h\in R$ and $M\in \HC^{n \times (n-1)}$ 
such that we can orthogonally decompose $f$ as
	\begin{equation}\label{aa2}
	f = \lambda X_1^d e_1 + X_1^{d-1}\sqrt{d} M \,  X' +h,
	\end{equation}
where $X'=(X_2,\ldots,X_n)^T$. We have that 
	\begin{equation}
	\partial_X f(e_1,\lambda) = \begin{bmatrix} \partial_{X_1}f(e_1,\lambda), & \partial_{X'}f(e_1,\lambda)\end{bmatrix}
    = \begin{bmatrix}  d\lambda e_1, & \sqrt{d} M\end{bmatrix}\in \HC^{n\times n}\label{jacobian_f}
	\end{equation}
Let $a\in\HC^{1 \times (n-1)}$ be the first row of $M$ and $A\in
\HC^{(n-1) \times (n-1)}$ be the matrix that is obtained by removing
the first row of $M$. 
By \Cref{jacobian} and \Cref{jacobian_f} the derivative of $F_f$ at $(e_1,\lambda)$ has the matrix representation
	\begin{equation*}
	\begin{bmatrix} \partial_X f(e_1,\lambda) - \lambda I_{n},&
          -e_1\end{bmatrix} = \begin{bmatrix} (d-1)\lambda &
          \sqrt{d}\, a & -1 \\ 0 & \sqrt{d} A-\lambda I_{n-1} & 0 \end{bmatrix} \in\HC^{n\times (n+1)}.
	\end{equation*}
This implies $\det DF_f(e_1,\lambda)|_{T_{e_1}\times \HC} = -\det\;(\sqrt{d} A-\lambda I_{n-1}).$ 

The summands in \Cref{aa2} are pairwise orthogonal. From this we get that $\lVert f\rVert^2 = \lvert \lambda \rvert^2+\lVert M\rVert_F^2+~\lVert h\rVert^2$, which implies that
	\[\varphi_{V(e_1,\lambda)}(f) = \frac{1}{\pi^{n}} \, e^{-\lvert \lambda\rvert^ {2}}\, \varphi_{\HC^{(n-1)\times (n-1)}}(A)\, \varphi_{\HC^n}(a)\, \varphi_R(h).\]
Integrating over $a$ and $h$ in \Cref{pushforward_integral} therefore yields
	\begin{align*}
	&\int_{f\in V(e_1,\lambda)}  \left\lvert \det DF_f(e_1,\lambda)|_{T_{e_1}\times \HC} \right\rvert^2  \varphi_{V(e_1,\lambda)}(f) \d V(e_1,\lambda)\\
	=\:& \frac{e^{-\lvert\lambda\rvert^{2}}}{\pi^n} \,
        \mean\limits_{A\sim N(0,\frac{1}{2}I_{n-1})} \left\lvert\det\left(\sqrt{d}\, A-\lambda I_{n-1}\right) \right\rvert^2 \\
	=\:& \frac{d^{n-1} e^{-\lvert\lambda\rvert^{2}}}{\pi^n} \,
        \mean\limits_{A\sim N(0,\frac{1}{2}I_{n-1})}
         \left\lvert \det\left(A-\frac{\lambda}{\sqrt{d}}I_{n-1}\right)\right\rvert^2 \\
	=\:& \frac{d^{n-1}}{\pi^n} \,  e^{-\lvert \lambda\rvert^{2}(1-\frac{1}{d})}\,  \Gamma\left(n,\frac{\lvert \lambda \rvert^{2}}{d}\right);
	\end{align*}
the last line by \Cref{expectation_det}. Plugging this into \Cref{pushforward_integral} the claim follows.
\end{proof}

\begin{proof}[Proof of \Cref{main_thm}]
\Cref{pushforward} shows that the distribution of the eigenvalue $\lambda$ only depends on $\lvert \lambda\rvert$. As in \Cref{R} we put~$r:=\lvert \lambda \rvert$ and $R:=2 r^2$. Making a change of variables, we obtain the density $\rho^{n,d}_\HR(R):=\frac{\pi}{2}\, \rho^{n,d}(r)$. From \Cref{pushforward} we obtain
	\begin{equation}\label{radial_distr}
	\rho^{n,d}_\HR(R) = \frac{ d^{n-1}}{2 D(n,d)}\, e^{-\frac{R}{2}}\,
	  \sum_{k=0}^{n-1}  \frac{1}{k!} \left(\frac{R}{2d}\right)^k.
	  \end{equation}	  
If $d=1$, \Cref{radial_distr} becomes
\begin{equation*}
	\rho^{n,1}_\HR(R) =   \frac{1}{n}\, \sum\limits_{k=0}^{n-1}\frac{e^{-\frac{R}{2}} R^k}{2^{k+1} k!}  
       =   \frac{1}{n}\, \sum\limits_{k=1}^{n}  \frac{e^{-\frac{R}{2}} R^{k-1}}{2^{k} (k-1)!}.
\end{equation*}
For any $k$ we have that $e^{-\frac{R}{2}} \, R^{k-1}/(2^{k} (k-1)!)$ is the density of a chi-square distributed random variable with $2k$ degrees of freedom, 
which proves the assertion in this case.

If $d>1$, put $q:=\frac{1}{d}$, such that $D(n,d)=(1-q^n)/(q^{n-1}(1-q))$. Then \Cref{radial_distr} becomes
\begin{equation*} 
	\rho^{n,d}_\HR(R)=\sum\limits_{k=0}^{n-1}    \frac{e^{-\frac{R}{2}}\, R^k}{2^{k+1} k!}\, \frac{(1-q)q^k}{1-q^n} 
        =\sum\limits_{k=1}^{n}    \frac{e^{-\frac{R}{2}}\, R^{k-1}}{2^{k} (k-1)!}\, \frac{(1-q)q^{k-1}}{1-q^n}.
  \end{equation*}
Using  that $\prob\limits_{X\sim \mathrm{Geo}(1-q)}\cset{X=k}{X\leq n} =q^{k-1}(1-q)/(1-q^n)$, see \Cref{truncated_dfn}, finishes the proof.
\end{proof}
\label{sec_second}

To prove \Cref{second_main_result} we will need the following lemma.
\begin{lemma}\label{main_cor}
Let $n\geq 1$.
\begin{enumerate}
\item If $d=1$, then $\mean\limits_{\lambda \sim \rho^{n,1}}\lvert \lambda \rvert^2 = \mean\limits_{X\sim \mathrm{Unif}(\set{1,\ldots,n})}[X],$  
\item If $d>1$, then $\mean\limits_{\lambda \sim \rho^{n,d}}\lvert \lambda \rvert^2 = \mean\limits_{X\sim \mathrm{Geo}(1-\frac{1}{d})}[X\mid X\leq n].$ 
\end{enumerate}
\end{lemma}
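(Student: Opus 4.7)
The plan is to read off the distribution of $R = 2|\lambda|^2$ from \Cref{main_thm} and exploit the fact that both statements present $\rho^{n,d}_\HR$ as a \emph{mixture} of chi-square densities $\chi^2_{2k}$ indexed by $k\in\{1,\ldots,n\}$, where the mixing weights are exactly the mass functions of the two distributions appearing in the lemma. The whole proof is then a one-line computation once one recalls that a chi-square with $2k$ degrees of freedom has mean $2k$.

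Concretely, I would first invoke \Cref{main_thm} to write, in both cases $d=1$ and $d>1$,
\[
\rho^{n,d}_\HR(R) \;=\; \sum_{k=1}^{n} p_k\,\chi^2_{2k}(R),
\]
where $p_k = \prob_{X\sim\mathrm{Unif}(\{1,\ldots,n\})}\{X=k\}$ when $d=1$ and $p_k = \prob_{X\sim\mathrm{Geo}(1-1/d)}\{X=k\mid X\leq n\}$ when $d>1$. Since $|\lambda|^2 = R/2$, linearity of expectation gives
\[
\mean_{\lambda\sim\rho^{n,d}}|\lambda|^2 \;=\; \tfrac{1}{2}\int_0^\infty R\,\rho^{n,d}_\HR(R)\,\d R \;=\; \tfrac{1}{2}\sum_{k=1}^{n} p_k \int_0^\infty R\,\chi^2_{2k}(R)\,\d R.
\]
The inner integral equals $2k$, the mean of the $\chi^2_{2k}$ distribution (a standard fact, also immediately verifiable from the formula $\chi^2_{2k}(R) = e^{-R/2}R^{k-1}/(2^k(k-1)!)$ via the identity $\int_0^\infty R^k e^{-R/2}\d R = 2^{k+1}k!$). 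Hence
\[
\mean_{\lambda\sim\rho^{n,d}}|\lambda|^2 \;=\; \sum_{k=1}^n p_k\cdot k \;=\; \mean[X],
\]
which is exactly the right-hand side in each of the two cases.

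There is no genuine obstacle here; the content of the lemma is already packaged inside \Cref{main_thm}, and the argument amounts to interchanging sum and integral (justified by nonnegativity and finiteness of the sum) together with the elementary moment calculation for $\chi^2_{2k}$. In case $d>1$, one may optionally check consistency with the closed form in \Cref{second_main_result} by substituting $q=1/d$ into the formula of \Cref{expectation_truncatedgeo}, which recovers $(n-(n+1)d+d^{n+1})/((d^n-1)(d-1))$, but this is not needed for proving the lemma itself.
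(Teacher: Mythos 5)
Your proposal is correct and follows essentially the same route as the paper's proof: both read off from \Cref{main_thm} that $\rho^{n,d}_\HR$ is a mixture of $\chi^2_{2k}$ densities with the stated weights, then compute $\mean\lvert\lambda\rvert^2=\tfrac{1}{2}\mean[R]=\tfrac{1}{2}\sum_k p_k\cdot 2k=\mean[X]$ using that a $\chi^2_{2k}$ variable has mean $2k$. No gaps; nothing further is needed.
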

\begin{proof}
We prove the claim for $d>1$. (The case $d=1$ is proven similarly.)
If $R=2\lvert\lambda\rvert^2$, then $\mean\limits_{\lambda \sim \rho^{n,d}}[\lvert \lambda \rvert^2]= \frac{1}{2}\mean[R]$. From \Cref{main_thm} we get
	\begin{align*}
	\mean[R] &= \int_{R=0}^\infty  R\, \rho^{n,d}_\HR(R) \d R\\
	 &= \sum_{k=1}^{n} \prob\limits_{X\sim \mathrm{Geo}(1-\frac{1}{d})}\cset{X=k}{X\leq n} \, \int_{R=0}^n R \,  \chi^2_{2k}(R)\d R.\\
	 &= \sum_{k=1}^{n} \prob\limits_{X\sim \mathrm{Geo}(1-\frac{1}{d})}\cset{X=k}{X\leq n}\, 2k\; =\;  2\mean\limits_{X\sim \mathrm{Geo}(1-\frac{1}{d})}[X\mid X\leq n],
	\end{align*}
where we have used that a $\chi_{2k}^2$-distributed random variable with $2k$ degrees of freedom has the expectation $2k$. 
\end{proof}

\begin{proof}[Proof of \Cref{second_main_result}]
If $d=1$, from \Cref{main_cor} we immediately get $\mean\limits_{\lambda \sim \rho^{n,1}}\lvert \lambda \rvert^2 = \frac{n+1}{2}.$

If $d>1$, by  \Cref{main_cor}, we have that $\mean\limits_{\lambda \sim \rho^{n,d}}\lvert \lambda \rvert^2= \mean\limits_{X\sim \mathrm{Geo}(1-\frac{1}{d})}[X\mid X\leq n].$ Therefore, \Cref{expectation_truncatedgeo} with $q:=\frac{1}{d}$  implies
\[\mean\limits_{\lambda \sim \rho^{n,d}}\lvert \lambda \rvert^2 =  \frac{n -(n+1)d +d^{n+1}}{(d^n-1)(d-1)}\]
as claimed. For fixed $n$ we obtain 
\[\lim\limits_{d\to 1}  \frac{n-(n+1)d +d^{n+1}}{(d^n-1)(d-1)}= \frac{n+1}{2}\]
by using de l'Hopital's rule twice. Therefore, the map
	\[\HR_{\geq 1} \to \HR,\quad d\mapsto \begin{cases} \frac{n+1}{2}, & \text{if } d=1 \\ \frac{n-(n+1)d +d^{n+1}}{(d^n-1)\,(d-1)} , & \text{if } d>1 \end{cases}.\]
is continous and differentiable on $\HR_{>1}$. One checks that its derivative on~$\HR_{>1}$ is negative. Hence, for fixed $n$, we see that $d\mapsto \mean\limits_{\lambda \sim \rho^{n,d}}[\lvert \lambda \rvert^2] $ is strictly decreasing. In the same way we can prove that, if $d$ is fixed, $n\mapsto \mean\limits_{\lambda \sim \rho^{n,d}}[\lvert \lambda \rvert^2] $ is strictly increasing. Further, 
\[\lim\limits_{d\to \infty} \frac{n-(n+1)d +d^{n+1}}{(d^n-1)\,(d-1)} = \lim\limits_{q\to 0}  \frac{nq^{n+1} -(n+1)q^n +1}{(1-q^n)\,(1-q)} =1\]
If $d> 1$, we have 
	\[\lim\limits_{n\to \infty} \mean\limits_{\lambda \sim \rho^{n,d}}[\lvert \lambda \rvert^2] = \lim\limits_{n\to \infty} \frac{nq^{n+1} -(n+1)q^n +1}{(1-q^n)\,(1-q)} = \frac{1}{1-q},\]
where again $q=\frac{1}{d}$.
\end{proof}


\label{sec_third}
\begin{proof}[Proof of \Cref{third_main_result}]
Let $d>1$. Recall from \Cref{tau} that we have put $\tau= \frac{R(d-1)}{4d}$ and that we denote the density of $\tau$ by $\rho^{n,d}_\textbf{norm}$. Using \Cref{main_thm}  we get 
	\begin{equation*}
	\rho^{n,d}_\textbf{norm}(\tau) = \frac{4 d}{d-1}\, \rho^{n,d}_\HR\left(\frac{4d\tau}{d-1}\right)\nonumber 
          = \frac{2 d^n}{d^n-1}\, e^\frac{-2d\tau}{d-1}\, \sum_{k=0}^{n-1}   \frac{1}{ k!} \left(\frac{2\tau}{d-1}\right)^k.
	\end{equation*} 
Again putting $q=\frac{1}{d}$ we obtain 
		\begin{equation*}
	\rho^{n,d}_\textbf{norm}(\tau) = \frac{2}{1-q^n}\, e^\frac{-2\tau}{1-q}\, \sum_{k=0}^{n-1}   \frac{1}{ k!} \left(\frac{2q\tau}{1-q}\right)^k.
	\end{equation*} 
Since $0<q<1$, we have $\lim\limits_{n \to \infty} q^n = 0$. Hence,  
	\begin{equation*}
	\lim_{n\to \infty} \rho^{n,d}_\textbf{norm}(\tau) =   2\, e^\frac{-2\tau}{1-q}\, \sum_{k=0}^{\infty}   \frac{1}{ k!} \left(\frac{2q\tau}{1-q}\right)^k 	 
         = 2\, e^{-2\tau}, 
	 \end{equation*}
which finishes the proof.
\end{proof}

{
\bibliographystyle{plain}
\bibliography{literature}}
Plots: MATLAB R2015b
\end{document}